\newtheorem{theorem}{Theorem}
\newtheorem{remark}{Remark}
\newtheorem{proposition}{Proposition}[section]
\newtheorem{lemma}[proposition]{Lemma}
\newtheorem{corollary}[proposition]{Corollary}
\newtheorem{definition}[proposition]{Definition}
\newtheorem{claim}{Claim}
\newtheorem{notation}{Notation}
\renewcommand{\ae}{{\it a.e. }}
\newcommand{\eg}{\emph{e.g.}\ }
\def\fibar{\ol{\varphi}_{\be}}
\def\1{\BBone}
\def\ogpsi{\ovec{\gramat\psi}}
\def\gt{{\gramat t}}
\def\gz{{\gramat z}}
\def\gxi{{\gramat \xi}}
\def\dom{{d_{\Om}}}
\def\Lipom{\CC^{+1}(\Om)}
\def\ga{\gamma}
\def\fibe{{\varphi_{\be}}}
\begin{document}

\title[Generalized Curie-Weiss Potts models and quadratic pressure]{Generalized
Curie-Weiss Potts models and quadratic pressure in ergodic theory}

\author{Renaud Leplaideur}
\address{ISEA,
Universit\'e de Nouvelle Calédonie,
145, Avenue James Cook - BP R4
         98 851 - Nouméa Cedex. Nouvelle Cal\'edonie}
    
\address{LMBA, UMR6205
Universit\'e de Brest.}
\email{renaud.leplaideur@unc.nc,
\url{http://rleplaideur.perso.math.cnrs.fr/} }

\author{Frédérique Watbled} 
\address{LMBA, UMR 6205, Universit\'e de Bretagne Sud,  Campus de Tohannic, BP 573, 56017 Vannes, France.}
\email{frederique.watbled@univ-ubs.fr}

\date{Version of \today}
\thanks{F. Watbled
 thanks the IRMAR, CNRS UMR 6625, University of Rennes 1, for its hospitality.}
\thanks{The authors thank the Centre Henri Lebesgue ANR-11-LABX-0020-01 for creating an attractive mathematical environment}
\subjclass[2010]{37A35, 37A50, 37A60, 82B20, 82B30, 82C26} 
\keywords{thermodynamic formalism, equilibrium states, Curie-Weiss model, 
Curie-Weiss-Potts model, Gibbs measure, phase transition, $XY$ model.}

\begin{abstract}
We extend results on quadratic pressure and convergence of Gibbs mesures from \cite{Leplaideur-Watbled1} to the Curie-Weiss-Potts model. We define the notion of equilibrium state for the quadratic pressure and show that under some conditions on the maxima for some auxiliary function, the Gibbs measure converges to a convex combination of eigen-measures for the Transfer Operator. 
This extension works for  dynamical systems defined by {infinite-to-one} maps. As an example, we compute the equilibrium for {the mean-field} $XY$ model as the number of particles goes to $+\8$.
\end{abstract}

\maketitle

\section{Introduction}\label{sec:intro}
\subsection{Background, main motivations, open questions}
In a recent work (\cite{Leplaideur-Watbled1}) {the} authors defined the notion of quadratic pressure associated to some potential $\psi$ for the symbolic dynamics  $\{0,1\}^{\N}$ (with the shift map). The motivation for that was to study similarities and differences between phase transitions in Ergodic Theory on the one hand and in Probability and Statistical Mechanics on the other hand. 

The authors pointed out that the Curie-Weiss model in Probability theory can be linked to Ergodic Theory with the quadratic equilibriums. More precisely, it was shown that \emph{Probability Gibbs Measures} (PGM for short) converge as the number of sites goes to $+\8$ to a convex combination of \emph{Dynamical Conformal Measures} associated to the invariant measures which maximize the quadratic pressure. 

In the present paper,  Theorem \ref{thmpressquadra} and \ref{th-GCWP} extend results of \cite{Leplaideur-Watbled1}  to the Curie-Weiss-Potts model. This is a natural question as the Curie-Weiss-Potts model is a kind of generalization of Curie-Weiss model. In view to give an application to the $XY$-model (see Section \ref{sec-vlasov}), the statement is done for dynamical systems which are not necessarily {finite-to-one}. 

For that goal, the notion of entropy needs to be made precise. This notion has already been investigated  and we mention \eg a series of works \cite{CioLop1,CioLop2, lopesentrop1, lopesentrop2}  and more recently \cite{GKLM}. 
We re-employ the idea to define the entropy as the Fenchel-Legendre transform of the pressure function and  to link it to a min-max problem. The main assumption in \cite{GKLM} is the existence of the spectral gap for the transfer operator and the purpose of Theorem \ref{th-rayonana} is thus to state this spectral gap. 

\bigskip

Some version of the transfer operator  with infinite-to-one map has already been studied in \cite{lopesentrop2}. We point out that in our case we have a more flexible operator as the transition depends on the two first coordinates (see below). Actually, we believe that it can easily be extended to the case where transitions only depend on finitely many coordinates, which is what should be a natural extension of the notion of subshift of finite type with infinite (and uncountable) alphabet. 
In { other words}, the transfer operator in \cite{lopesentrop2} is the one for a full-shift of finite type whereas our is the one for more general irreducible subshift of finite type. 
Furthermore, we study here the regularity of the spectral radius\footnote{This is one key point in our work.}, in particular for the multi-dimensional case,   and we did not find any reference of that problem in \cite{lopesentrop2}. 

\bigskip

Theorem \ref{thmpressquadra} is where we make the link between equilibrium states for the linear pressure and the equilibrium states for the quadratic pressure. This result goes in the same direction as \cite{Leplaideur-Watbled1} and more recently \cite{Buzzi-Leplaideur}. Equilibrium states for quadratic pressure are equilibrium states for the linear pressure but with a change of the parameter. 

\bigskip

Theorem \ref{th-GCWP}  is where we make links between Dynamical Gibbs Measures and Probabilistic Gibbs Measures. It deals with convergence of the PGM to a convex combination of eigen-measures for the Transfer Operator. One of the key points is the Laplace method. We point out here a big difference between the 1d case and the multi-dimensional case. In the 1d-case, the Laplace method can be applied even if the Hessian at maximal points is degenerated. \\
{ We remind that Laplace method deals with integrals of the form $\int_{a}^{b}f(t)e^{n\phi(t)}dt$ and gives an equivalent of this quantity as $n$ goes to $+\8$. This equivalent only involves values $c_{i}$ where $\phi$  is maximal. 
For each $c_{i}$ one gets an expression in $n$ depending on how flat $\phi$ is closed to $c_{i}$ and also on how $f$ behaves closed to $c_{i}$. \\
The crucial point is that in dimension 1, we get an expression  at each $c_{i}$ and we can compare them. We remind that roughly speaking, it was shown in \cite{Leplaideur-Watbled1} that only  the maxima where $\phi$ is the flattest yield a positive contribution for the limit of the PGM.\\
On the contrary, this comparison between maxima does not seem to be (easily) possible if we deal with an integral in higher dimension, unless all the maxima have a non-degenerated Hessian. The consequence in our problem is that we can precisely determine what is the convex combination for the limit of the PGM, only if all the maxima are non-degenerated. }

This naturally leads to ask for which $\ogpsi$ the Hessian is non-degenerated. We have no idea for the answer yet. As we will see below, the mean-field $XY$ model has degenerate Hessian 
 {but it reduces to a one dimensional problem and we can deal with it}. 

\subsection{Settings and results}
\subsubsection{Shift with (possibly) infinite alphabet}
Let $(E,d)$ be a compact metric space, let $\rho$ be a Borel probability measure on $E$ with full support. We assume that $\rho$ satisfies the following assumption
$$\textbf{(H):}
\textrm{ for any sufficiently small }\eps>0, x\mapsto \rho(B(x,\eps))\textrm{ is continuous.}$$  
We consider a map $A:E\times E\to [0,1]$ called the transition function, which satisfies the following properties:
\begin{description}
\item[(A1)] $A$ is continuous with values in $\{0,1\}$. 
\item[(A2)] $A$ is Lipschitz continuous with respect to the second variable with Lipschitz constant $\Lip(A)$. 
\item[(A3)] $A$ generates some mixing in the following sense. 
\begin{multline}
\label{equ-mixingA}
\exists N\in\bb N,\, \forall\,n\ge N,\ \forall\, a,b\in E,\, \exists z_{1},\ldots ,z_{n-1}\in E\\
\textrm{ such that } A(a,z_{1})A(z_{1},z_{2})\ldots A(z_{n-1},b)=1.
\end{multline}
\end{description}

\begin{remark}
\label{rem-Atransimatrix}
The assumption (A1) yields that $A$ is constant with value 0 or 1 on each connected component of $E\times E$.  
The assumption (A3) implies in particular that $A$ is not identically null.
\end{remark}

We define $\Om\subset E^{\N}$ in the following way:
$$\Om:=\left\{x=x_{0}x_{1}x_{2}\ldots\in E^{\N}; \ \forall\,i\in\bb N,\ A(x_{i},x_{i+1})>0\right\}.$$
The shift map $\sigma:\Om\to\Om$ is defined by
$$\sigma(x_{0}x_{1}x_{2}\ldots)=x_{1}x_{2}\ldots.$$
Note that if $E$ is connected, \eg $E=[0,1]$, then $A\equiv 1$. If $E$ is a finite set $\{1,\ldots k\}$, then $\Om$ is the subshift of finite type with transition matrix having entries $A(i,j)$. 

For $n\geq 1$, let $\Om_{n}$ be the set of words $z_{1}\ldots z_{n}$ with $\disp\prod_{i=1}^{n-1} A(z_{i},z_{i+1})=1$.
For $a$ and $b$ in $E$, let $\Om_{n-1}(a,b)$ be the set of words $z_{1}\ldots z_{n-1}$ 
in $\Om_{n-1}$ with $A(a,z_1)=A(z_{n-1},b)=1$. Assumption (A3) on $A$ means that for every $a$, $b$ in $E$, for every $n\ge N$, $\Om_{n-1}(a,b)\neq\emptyset$. It implies in particular
that for every $a$ in $E$, there always exist $u$, $v$ in $E$ such that $A(a,u)=1$ and $A(v,a)=1$.
We denote by $\Om_{n}(b)$ the set of words $z_{0}\ldots z_{n-1}$ 
in $\Om_{n}$ with $A(z_{n-1},b)=1$. 

We set $\P=\rho^{\otimes\N}$. The distance over $\Om$ is defined by 
$$d_{\Om}(x,y)=\sum_{n=0}^{+\8}\frac{d(x_{n},y_{n})}{2^{n+1}}.$$
We notice that for any $a$ in $\Omega_n$,
$$d_{\Om}(ax,ay)=\frac{1}{2^n}d_{\Om}(x,y)$$
and that
$$d_{\Om}(\sigma^n x,\sigma^n y)=2^n\pare{d_{\Om}(x,y)-\sum_{k=0}^{n-1}\frac{d(x_{k},y_{k})}{2^{k+1}}}.$$

We denote by $\CC^{0}(\Om)$, respectively $\CC^{+1}(\Om)$, the set of continuous,
respectively Lip\-schitz continuous, functions from $\Om$ to $\bb R$,
equipped respectively with the norms
$$\|\phi\|_\infty=\underset{x\in\Omega}{\max}|\phi(x)|, \qquad
\|\phi\|_L=\|\phi\|_\infty+\Lip(\phi),$$
where $\Lip(\phi)$ stands for the Lipschitz constant of $\phi$. 
We recall that the spaces $(\CC^{0}(\Om),\|\cdot\|_\infty)$ and 
$(\CC^{+1}(\Om),\|\cdot\|_L)$ are Banach spaces.
We set $M(\Omega)$ the space of probability measures on $\Omega$ and recall that by the Riesz representation theorem, the map $\mu\mapsto (f\mapsto \int f\,d\mu)$ is a bijection 
between $M(\Om)$ and 
$$\{l\in \CC^{0}(\Om)^*; l(\BBone)=1 \textrm{ and } l(f)\geq 0 \textrm{ whenever }f\geq 0\}.$$  
A measure $\mu$ is $\sigma$-invariant if $\mu(\sigma^{-1}(B))=\mu(B)$ for all Borel sets $B$.
The set $M_\sigma(\Omega)$ is the space of $\sigma$-invariant probability measures on $\Om$. Both $M(\Omega)$ and $M_\sigma(\Omega)$ are convex and compact for the weak star topology.

The transfer operator associated to $\phi:\Om\to\R$ (Lipschitz continuous) is the linear operator defined by 
$$\CL_{\phi}(f)(\om)=\int_E e^{\phi(t\om)}A(t,\om_{0})f(t\om)\,d\rho(t).$$

Theorem \ref{th-rayonana} states several properties on the spectrum of the transfer operator. To properly state the theorem we need to introduce some more quantities.

The operator $\CL_{\phi}$ acts on $\CC^{0}(\Om)$ and on $\Lipom$.
The spectral radius of $\CL_{\phi}$ on $\CC^{0}(\Om)$, denoted by $r_{\phi}$, 
is a simple eigenvalue of the adjoint operator $\CL_{\phi}^\star$
acting on the space of Radon measures on $\Om$,
and the conformal measure $\nu_\phi$ is the unique probability eigen-measure associated to the eigenvalue $r_\phi$. It is also a simple eigenvalue of $\CL_{\phi}$ acting on $\Lipom$, with a
positive eigenfunction $G_\phi$ such that the measure $\mu_\phi=G_\phi\nu_\phi$ is a probability measure. We call $\mu_\phi$ the dynamical Gibbs measure (DGM for short) associated to $\phi$.

If $\gz$ belongs to $\R^{q}$ and $\psi_{i}$, $i=1,\ldots q$ are in $\Lipom$ one sets $\ogpsi:=(\psi_{1},\ldots,\psi_{q})$ and $\gz\cdot\ogpsi:=\sum_{i=1}^{q}z_{i}\psi_{i}$. We note $||\gz||$ the Euclidean norm of $\gz$
$$||\gz||^{2}=\sum_{i=1}^q z_{i}^{2}.$$

\begin{definition}
\label{def-entropyclasse}
For fixed $\ogpsi$ and $\gz\in\R^{q}$, one sets 
$$\CH(\gz,{ \ogpsi}):=\inf_{\gt\in\R^{q}}\left\{\log r_{\gt\cdot\ogpsi}-\gt\cdot\gz\right\}.$$
and 
$$I(\ogpsi):=\left\{\int \ogpsi\,d\mu,\ \mu\in M_\sigma(\Omega)\right\}.$$  
\end{definition}
Note that $I(\ogpsi)$ is a closed convex subset of $\R^{q}$. { Moreover,  $\gz\mapsto\CH(\gz,\ogpsi)$ is upper semi-continuous, as it is an infimum of affine functions.}

\begin{theorem}
\label{th-rayonana}
For any $\phi\in \CC^{+1}(\Om)$, $r_{\phi}$ is a simple single dominating eigenvalue. 
Moreover, 
for any $\ogpsi$ with $\psi_{i}\in \Lipom$, the map
$\CP:\gt\mapsto \log r_{\gt\cdot\ogpsi}$ is infinitely differentiable. 

Furthermore, 
\begin{enumerate}
\item $\CH(\gz,\ogpsi)=-\infty$ if $\gz\notin I(\ogpsi)$,
\item $\CH(\gz,\ogpsi)$ is finite if $\gz\in \nabla \CP(\bb R^q)$.
\end{enumerate}
\end{theorem}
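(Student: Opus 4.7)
The plan is to establish the four assertions in order, with the spectral-gap assertion~(1) carrying most of the technical content; (2) is a perturbation consequence of~(1), and (3)--(4) are soft convex-analysis consequences of~(1)--(2). For~(1), I would run the Ruelle-Perron-Frobenius machinery adapted to the uncountable-alphabet setting. The space $\Om$ is closed in the compact space $E^{\bb N}$ by~(A1), hence compact, so the inclusion $\Lipom\hookrightarrow\CC^{0}(\Om)$ is compact by Arzelà-Ascoli. The core estimate is a Doeblin-Fortet/Lasota-Yorke inequality
\[
\Lip(\CL_{\phi}^{n} f) \;\le\; C\,\theta^{n}\,\Lip(f)\;+\; D_{n}\,\|f\|_{\infty},\qquad \theta<r_{\phi},
\]
whose proof combines the $2^{-n}$ contraction of $d_{\Om}$ under prepending cylinders, the Lipschitz regularity of $\phi$, and assumption~(A2) giving Lipschitz control on $A(t,\cdot)$. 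By the Ionescu-Tulcea-Marinescu theorem this yields quasi-compactness of $\CL_{\phi}$ on $\Lipom$, so $r_{\phi}$ is an isolated eigenvalue of finite multiplicity. Positivity of $\CL_{\phi}$ produces a nonnegative eigenfunction $G_{\phi}$, and the mixing assumption~(A3), which plays the role of primitivity and holds for \emph{every} $n\ge N$, forces $G_{\phi}>0$, simplicity of $r_{\phi}$, and absence of other peripheral eigenvalues.

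Assertion~(2) then follows from Kato's analytic perturbation theorem. Since $\Lipom$ is a Banach algebra, the series $\sum \frac{1}{n!}(\gt\cdot\ogpsi)^{n}$ converges in $\|\cdot\|_{L}$ for every $\gt$, so $\gt\mapsto e^{\gt\cdot\ogpsi}$ is real analytic into $\Lipom$ and hence $\gt\mapsto\CL_{\gt\cdot\ogpsi}$ is analytic in operator norm. Simple isolated eigenvalues depend analytically on such parameters, and since $r_{\gt\cdot\ogpsi}>0$, the function $\CP(\gt)=\log r_{\gt\cdot\ogpsi}$ is real analytic, in particular $C^{\infty}$. Differentiating $\CL_{\gt\cdot\ogpsi}G_{\gt\cdot\ogpsi}=r_{\gt\cdot\ogpsi}G_{\gt\cdot\ogpsi}$ in $t_{i}$ and pairing with $\nu_{\gt\cdot\ogpsi}$ (normalized so $\int G_{\gt\cdot\ogpsi}\,d\nu_{\gt\cdot\ogpsi}=1$) gives the key identity
\[
\nabla\CP(\gt)\;=\;\int \ogpsi\,d\mu_{\gt\cdot\ogpsi}\;\in\;I(\ogpsi),
\]
which is the bridge to~(3)--(4). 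Convexity of $\CP$ follows separately from Hölder's inequality applied to iterates of $\CL$.

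For~(3), suppose $\gz\notin I(\ogpsi)$. Since $I(\ogpsi)$ is closed convex, the Hahn-Banach separation theorem yields $\gt_{0}\in\R^{q}$ and $\alpha\in\R$ with $\gt_{0}\cdot\gz > \alpha \ge \sup_{\gw\in I(\ogpsi)} \gt_{0}\cdot\gw$. The gradient formula above gives $\gt_{0}\cdot\nabla\CP(s\gt_{0})\le \alpha$ for every $s\ge 0$, so integrating along the ray $\gt=\lambda\gt_{0}$ yields
\[
\CP(\lambda\gt_{0}) - \lambda\gt_{0}\cdot\gz \;\le\; \CP(0) - \lambda\,(\gt_{0}\cdot\gz-\alpha),
\]
which tends to $-\infty$ as $\lambda\to+\infty$, whence $\CH(\gz,\ogpsi)=-\infty$. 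For~(4), if $\gz=\nabla\CP(\gt_{0})$, convexity of $\CP$ gives the supporting-line inequality $\CP(\gt)\ge \CP(\gt_{0})+\gz\cdot(\gt-\gt_{0})$, so the infimum defining $\CH(\gz,\ogpsi)$ is attained at $\gt_{0}$ and equals the finite number $\CP(\gt_{0})-\gt_{0}\cdot\gz$.

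The main obstacle is the Lasota-Yorke estimate underlying assertion~(1). Unlike the finite-alphabet setting, the transfer operator is an integral over an uncountable fibre weighted by $A(\cdot,\omega_{0})$, which is only continuous in~$t$; one must balance the geometric contraction coming from $d_{\Om}$ against the Lipschitz seminorm of the varying integrand, and the mixing hypothesis~(A3)---holding for \emph{every} $n\ge N$---is what upgrades quasi-compactness to genuine simplicity of $r_{\phi}$ and hence the desired gap.
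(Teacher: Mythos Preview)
Your proposal is correct and follows essentially the same route as the paper: Ionescu--Tulcea--Marinescu for quasi-compactness via a Lasota--Yorke inequality, positivity plus mixing~(A3) for simplicity and the spectral gap (the paper invokes Krasnosel'ski\u\i's strong-positivity theorems explicitly here), analytic perturbation of a simple isolated eigenvalue for the smoothness of $\CP$ (the paper cites Hennion--Herv\'e rather than Kato, but the content is the same), and convex duality for items~(3)--(4). The only cosmetic difference is that for~(3) the paper quotes the Rockafellar inclusion $\Dom\CP^{*}\subset\overline{\nabla\CP(\R^{q})}$, whereas you give the equivalent hands-on Hahn--Banach separation argument.
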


We call \emph{pressure function} for $\ogpsi$ the map $\gt\mapsto \CP(\gt)$. 

\subsubsection{ Quadratic Pressure}
For fixed $\ogpsi\in \Lipom^{q}$ and for $\beta\geq 0$, $\gt$, $\gz$ in $\bb R^q$, we set
$$\varphi_{\be}(\gt):=-\frac\be2||\gt||^{2}+\log r_{\be \gt\cdot\ogpsi}\text{ and }\fibar(\gz):=\CH(\gz,\ogpsi)+\frac\be2||\gz||^{2}.$$
\begin{notation}
We set $\CH_{top}:=\log r_{0}$.
\end{notation}

\begin{definition}
\label{def-relativentropyquadrpress}
For $\mu$ in $M_\sigma(\Omega)$, the \emph{entropy} 
is the quantity 
$$\wh\CH(\mu):={\inf_{\ogpsi\in \Lipom^{q}}\CH\left(\int\ogpsi\,d\mu,\ogpsi\right).}$$

Let $\ogpsi\in\Lipom^{q}$ be fixed. 
The quantity $$\CP_{2}(\be)=\sup_{\mu}\left\{\wh\CH(\mu)+\frac\be2\left\|\int\ogpsi\,d\mu\right\|^{2}\right\}$$ is referred to as the quadratic pressure function for $\ogpsi$. 
\end{definition}

\begin{remark}
\label{rem-whusc}
In \cite{GKLM} the authors define the entropy by setting
$$h_{\cal X}(\mu):=\inf_{A\in \cal X(\Omega)}\pare{\log r_A-\int A\,d\mu}\textrm{ for }\mu\in M_\sigma(\Omega)$$
and the pressure by setting
$$\rm{Pr}(B):=\sup_{\mu\in M_\sigma(\Omega)}\pare{h_{\cal X}(\mu)+\int B\,d\mu}\textrm{ for }B\in \cal X(\Omega),$$
where $\cal X(\Omega)$ is a suitable space of potentials $\Omega\to\bb R$.
We notice that our definition of entropy is the same as theirs with $\cal X(\Omega)=\Lipom$, whereas 
our definition of pressure is linked to theirs by
$\cal P(\gt)=\rm{Pr}(\gt\cdot\ogpsi)$, where $\ogpsi$ is fixed in $\Lipom^{q}$.
We point out that $\mu\mapsto\wh\CH(\mu)$ is upper semi-continuous as the infimum over a family of upper semi-continuous functions. 
$\blacksquare$\end{remark}

From Theorem \ref{th-rayonana} we can use the work of Giulietti and \emph{ al} in \cite[th.F]{GKLM}. We emphasize that the key point in their work is the spectral decomposition of the transfer operator, which is stated in Theorem \ref{th-rayonana}. Then, we deduce the existence of the DGM which is the unique equilibrium state for $\phi$. 

Stated with our settings, we get that 
for any  $\ogpsi\in\Lipom^{q}$  and for any $\gramat t$ there is a unique invariant measure $\mu_{\gt.\ogpsi}$ which maximizes $\disp\wh\CH(\mu)+\int\gt\cdot\ogpsi\,d\mu$. Moreover, 
\begin{equation}\label{unique}
\CP(\gt)=\wh\CH(\mu_{\gt\cdot\ogpsi})+\int\gt\cdot\ogpsi\,d\mu_{\gt\cdot\ogpsi}.
\end{equation}

Convexity for the multi-dimensional pressure function $\gt\mapsto \CP(\gt)=\sup_{\mu}\{\wh\CH(\mu)+\int\gt\cdot\ogpsi\,d\mu\}$ and differentiability obtained from Theorem \ref{th-rayonana} yield that for every $\gt$ and every $i$, $\disp\frac{\partial \log r_{\gt\cdot\ogpsi}}{\partial t_{i}}=\int\psi_{i}\,d\mu_{\be\gt\cdot\ogpsi}.$

\bigskip

\begin{theorem}\label{thmpressquadra}
\emph{\textbf{Equilibrium states for the quadratic pressure.}}
For any  $\ogpsi\in\Lipom^{q}$ and for any $\be\ge 0$ the invariant probability measures which ma\-xi\-mi\-ze $\CP_{2}(\be)$ are the dynamical Gibbs measures $\mu_{\be\gramat t\cdot\ogpsi}$ where the $\gramat t$'s are the maxima for $\varphi_{\be}$.
\end{theorem}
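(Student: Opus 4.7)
The plan is to set up a Legendre-type duality between $\CP_{2}(\be)$ and $\sup_{\gt}\varphi_{\be}(\gt)$, identify the associated saddle point, and then invoke uniqueness of the linear equilibrium from (\ref{unique}) to pin down the maximizing measures.

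The key algebraic input is the elementary identity $\frac{\be}{2}\|\gz\|^{2}=\sup_{\gt\in\R^{q}}\{\be\gt\cdot\gz-\frac{\be}{2}\|\gt\|^{2}\}$, with equality precisely when $\gt=\gz$. Plugged into the definition of $\CP_{2}(\be)$ and after swapping the two suprema (always legitimate), this would give
\begin{equation*}
\CP_{2}(\be)=\sup_{\gt\in\R^{q}}\Bigl\{-\tfrac{\be}{2}\|\gt\|^{2}+\sup_{\mu\in M_\sigma(\Om)}\Bigl(\wh\CH(\mu)+\int\be\gt\cdot\ogpsi\,d\mu\Bigr)\Bigr\}=\sup_{\gt\in\R^{q}}\varphi_{\be}(\gt),
\end{equation*}
since by (\ref{unique}) the inner supremum equals $\CP(\be\gt)=\log r_{\be\gt\cdot\ogpsi}$ and is uniquely attained at $\mu_{\be\gt\cdot\ogpsi}$ (Theorem \ref{th-rayonana}). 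A supremum over $\gt$ of $\varphi_{\be}$ exists by coercivity: $\log r_{\be\gt\cdot\ogpsi}$ grows at most linearly in $\|\gt\|$ (compare with $r_{0}$ and the sup-norm of $\be\gt\cdot\ogpsi$), which is dominated by the quadratic term.

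For the forward direction, let $\gt^{\star}$ be a maximum of $\varphi_{\be}$. The critical point equation $\nabla\varphi_{\be}(\gt^{\star})=0$, combined with the formula $\partial_{t_{i}}\log r_{\gt\cdot\ogpsi}=\int\psi_{i}\,d\mu_{\gt\cdot\ogpsi}$ stated just before Theorem \ref{thmpressquadra}, yields $\gt^{\star}=\int\ogpsi\,d\mu_{\be\gt^{\star}\cdot\ogpsi}$. Setting $\mu^{\star}:=\mu_{\be\gt^{\star}\cdot\ogpsi}$ and using (\ref{unique}) to rewrite $\wh\CH(\mu^{\star})=\CP(\be\gt^{\star})-\be\|\gt^{\star}\|^{2}$, I get $\wh\CH(\mu^{\star})+\frac{\be}{2}\|\int\ogpsi\,d\mu^{\star}\|^{2}=\varphi_{\be}(\gt^{\star})=\CP_{2}(\be)$, so $\mu^{\star}$ realizes the quadratic pressure.

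Conversely, let $\mu^{\star}$ maximize $\CP_{2}(\be)$ and set $\gt^{\star}=\int\ogpsi\,d\mu^{\star}$. Applying the equality case of the Legendre identity at $\gt=\gt^{\star}$ produces the chain
\begin{equation*}
\CP_{2}(\be)=\wh\CH(\mu^{\star})+\int\be\gt^{\star}\cdot\ogpsi\,d\mu^{\star}-\tfrac{\be}{2}\|\gt^{\star}\|^{2}\le\CP(\be\gt^{\star})-\tfrac{\be}{2}\|\gt^{\star}\|^{2}=\varphi_{\be}(\gt^{\star})\le\CP_{2}(\be).
\end{equation*}
All inequalities are therefore equalities: $\gt^{\star}$ maximizes $\varphi_{\be}$, and $\mu^{\star}$ is an equilibrium state for the linear potential $\be\gt^{\star}\cdot\ogpsi$. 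Uniqueness of that equilibrium (a consequence of (\ref{unique}) and Theorem \ref{th-rayonana}) forces $\mu^{\star}=\mu_{\be\gt^{\star}\cdot\ogpsi}$. The only conceptual subtlety, once the duality identity is written down, is observing that the saddle variable $\gt^{\star}$ is automatically $\int\ogpsi\,d\mu^{\star}$; the rest is bookkeeping with (\ref{unique}) and the uniqueness of the DGM.
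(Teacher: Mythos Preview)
Your argument is correct and is in fact more streamlined than the paper's own proof. You exploit directly the Legendre identity for the squared norm, $\frac{\be}{2}\|\gz\|^{2}=\sup_{\gt}\{\be\gt\cdot\gz-\frac{\be}{2}\|\gt\|^{2}\}$, which lets you pass from the quadratic pressure to $\sup_{\gt}\varphi_{\be}(\gt)$ in one stroke by commuting the suprema. The paper instead introduces a second auxiliary function $\fibar(\gz)=\CH(\gz,\ogpsi)+\frac{\be}{2}\|\gz\|^{2}$, proves that the ``restricted entropy'' $\wt\CH(\gz)=\sup\{\wh\CH(\mu):\int\ogpsi\,d\mu=\gz\}$ coincides with the Fenchel--Legendre quantity $\CH(\gz,\ogpsi)$ (via a biconjugate argument and upper semi-continuity of $\wt\CH$), and then shows in a separate proposition that $\varphi_{\be}\ge\fibar$ with coinciding maxima. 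The converse direction in the paper then goes through $\fibar$ rather than directly through the linear variational principle. Your route avoids $\fibar$ and the equality $\wt\CH=\CH(\cdot,\ogpsi)$ altogether; the only cost is that the paper's detour yields that identity as a byproduct, which is of independent interest for the entropy defined in Definition~\ref{def-relativentropyclasse} (actually Definition~\ref{def-relativentropyquadrpress}). One small remark: the case $\be=0$ is a degenerate edge case for your critical-point computation (the equation $\nabla\varphi_{0}=0$ is vacuous), but the statement is then trivial since $\varphi_{0}\equiv\CH_{top}$ and all $\mu_{0\cdot\gt\cdot\ogpsi}$ collapse to $\mu_{0}$.
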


This result goes in the same direction as the ones from \cite{Leplaideur-Watbled1} and \cite{Buzzi-Leplaideur}. However, we point out some interesting difference here: in the higher-dimensional case, there may be infinitely many measures which maximize the quadratic pressure. This is actually the case for $XY$-model (see below Remark \ref{rem-infinitequilquadra}).

\subsubsection{Generalized Curie-Weiss-Potts Hamiltonian}
For $\phi\in\Lipom$, we remind that $S_{n}(\phi)$ stands for $\phi+\ldots+\phi\circ \s^{n-1}$. With previous notations, $S_{n}(\ogpsi)$ is the vector with coordinates $S_{n}(\psi_{i})$. 
Then, the Generalized Curie-Weiss-Potts Hamiltonian is defined for  $\omega\in\Om$ by
$$H_{n}(\om):=-\frac1{2n}\|S_n(\ogpsi)(\omega)\|^2.$$

We define the probabilistic Gibbs measure (PGM for short) $\mu_{n,\beta}$ on $\Om$ by
\begin{equation}\label{eq-def-gibbsCWP}
\mu_{n,\beta}(d\omega):=\frac{e^{-\beta H_{n}(\om)}}{Z_{n,\beta}}\bb P(d\omega)
=\frac{e^{\frac{\be}{2n}\|S_{n}(\ogpsi)(\om)\|^2}}{Z_{n,\beta}}\bb P(d\omega),
\end{equation}
where $Z_{n,\beta}$ is the suitable normalization factor.

If $P_n$, $P$ are probability measures in $M(\Omega)$, we say that $P_n$ converges weakly to $P$ if $\int_\Omega f\,dP_n\rightarrow \int_\Omega f\, dP$ for each $f$ in $\CC^{0}(\Om)$.
As $\CC^{+1}(\Om)$ is dense in $\CC^{0}(\Om)$, this is equivalent to $\int_\Omega f\,dP_n\rightarrow \int_\Omega f\, dP$ for each $f$ in $\CC^{+1}(\Om)$. 

\begin{theorem}{{\bf Generalized Curie-Weiss-Potts model.}}
\label{th-GCWP}

One dimensional case: if $q=1$, then the PGM $\munbe$ converges weakly to a convex combination of the conformal measures $\nu_{\be t\psi}$ associated to the $\mu_{\be t\psi}$'s from Theorem \ref{thmpressquadra} as $n$ goes to $+\infty$. 

Higher dimensional case: for $q>1$, if $\varphi_{\be}$ attains its maximum only on non-degenerated points (\ie $d^{2}\varphi_{\be}$ is invertible), then they are finitely many and the PGM $\munbe$ converges weakly to a convex combination of the conformal measures $\nu_{\be\gt\cdot\ogpsi}$ associated to the $\mu_{\be\gt\cdot\ogpsi}$'s, where the $\gt$'s are the maxima for $\varphi_{\be}$. 
\end{theorem}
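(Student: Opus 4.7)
The plan is to adapt the Hubbard--Stratonovich linearization from \cite{Leplaideur-Watbled1} to the multidimensional setting and to combine it with a Laplace asymptotic analysis whose prefactor is supplied by the spectral theory of Theorem \ref{th-rayonana}. For a test function $f\in\Lipom$, write $\int f\,d\munbe=Z_{n,\be}^{-1}\int f(\om)\,e^{\frac{\be}{2n}\|S_{n}(\ogpsi)(\om)\|^{2}}d\bb P(\om)$ and apply the Gaussian identity
$$e^{\frac{\be}{2n}\|x\|^{2}}=\left(\frac{n\be}{2\pi}\right)^{q/2}\int_{\bb R^{q}}e^{-\frac{n\be}{2}\|\gramat{s}\|^{2}+\be\gramat{s}\cdot x}\,d\gramat{s}$$
with $x=S_{n}(\ogpsi)(\om)$. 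After Fubini, both numerator and $Z_{n,\be}$ reduce to $(n\be/2\pi)^{q/2}\int_{\bb R^{q}}e^{-\frac{n\be}{2}\|\gramat{s}\|^{2}}J_{n}(f,\gramat{s})\,d\gramat{s}$, where $J_{n}(f,\gramat{s}):=\int f(\om)\,e^{\be\gramat{s}\cdot S_{n}(\ogpsi)(\om)}d\bb P(\om)$.

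The next step is to control $J_{n}(f,\gramat{s})$ through iterates of the transfer operator $\CL_{\be\gramat{s}\cdot\ogpsi}$. Theorem \ref{th-rayonana} ensures that the leading eigenvalue $r_{\be\gramat{s}\cdot\ogpsi}$ is simple and isolated with eigendata smooth in $\gramat{s}$, so that uniformly on compact sets
$$J_{n}(f,\gramat{s})=c_{f}(\gramat{s})\,r_{\be\gramat{s}\cdot\ogpsi}^{\,n}\bigl(1+o(1)\bigr),\qquad \frac{c_{f}(\gramat{s})}{c_{\BBone}(\gramat{s})}=\int f\,d\nu_{\be\gramat{s}\cdot\ogpsi},$$
with $c_{f}$ continuous in $\gramat{s}$. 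After substitution, numerator and denominator take the form $\int g(\gramat{s})\,e^{n\fibe(\gramat{s})}\,d\gramat{s}$ (with $f$- or $\BBone$-dependent $g$), where $\fibe(\gramat{s})=-\frac{\be}{2}\|\gramat{s}\|^{2}+\log r_{\be\gramat{s}\cdot\ogpsi}$ is the function from Theorem \ref{thmpressquadra}. Because $\log r_{\be\gramat{s}\cdot\ogpsi}$ grows at most linearly in $\|\gramat{s}\|$, $\fibe$ tends to $-\infty$ at infinity, so the asymptotic contributions concentrate in a fixed compact neighbourhood of the global maximizers of $\fibe$.

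Third, apply Laplace's method. In the non-degenerate higher-dimensional case every global maximum of $\fibe$ is an isolated critical point and hence their number $m$ is finite; the classical multidimensional Laplace expansion gives, at the common maximum value $\fibe(\gramat{t}_{*})$,
$$\int g(\gramat{s})\,e^{n\fibe(\gramat{s})}\,d\gramat{s}\sim e^{n\fibe(\gramat{t}_{*})}\left(\frac{2\pi}{n}\right)^{q/2}\sum_{i=1}^{m}\frac{g(\gramat{t}_{i})}{\sqrt{\det(-d^{2}\fibe(\gramat{t}_{i}))}}.$$
Taking the ratio of numerator over denominator cancels the common exponential and $n^{-q/2}$ prefactor, leaving
$$\int f\,d\munbe\tend{n}{\infty}\sum_{i=1}^{m}w_{i}\int f\,d\nu_{\be\gramat{t}_{i}\cdot\ogpsi},\qquad w_{i}\propto\frac{c_{\BBone}(\gramat{t}_{i})}{\sqrt{\det(-d^{2}\fibe(\gramat{t}_{i}))}},$$
normalized to $\sum_{i}w_{i}=1$, that is, a convex combination of the conformal measures.

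For the one-dimensional case $q=1$ the same scheme applies, but the classical Laplace formula is replaced by its generalization to degenerate maxima: at a maximum $t_{i}$ of flatness order $2k_{i}$ the contribution scales as $n^{-1/(2k_{i})}$, so only the flattest maxima contribute to the limit, recovering the convex combination described in \cite{Leplaideur-Watbled1}. The main technical obstacle is the uniform-in-$\gramat{s}$ control of the spectral asymptotic for $J_{n}(f,\gramat{s})$, indispensable for legitimately combining the transfer-operator expansion with the Gaussian integral; it requires the smooth dependence of the spectral data granted by Theorem \ref{th-rayonana} together with quantitative Gaussian decay estimates to truncate to a compact region before Laplace's method is applied.
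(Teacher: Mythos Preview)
Your proposal is correct and follows essentially the same route as the paper: Hubbard--Stratonovich linearization, identification of the resulting $\om$-integral with $\int_{\Om}\CL^{n}_{\be\gramat{s}\cdot\ogpsi}(f)\,d\bb P$, truncation to a compact region via the quadratic decay of $\varphi_{\be}$, insertion of the spectral decomposition (with the spectral gap providing uniform control of the remainder on compacta), and then the Laplace method applied to the ratio. Your prefactor $c_{\BBone}(\gramat{s})$ is exactly the paper's $\int_{\Om}G_{\be\gramat{s}\cdot\ogpsi}\,d\bb P$, and your finiteness argument for the non-degenerate maxima (isolated critical points in a compact set) is a streamlined version of the paper's Rolle-type lemma.
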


\begin{remark}
\label{rm-classicCWP}
The ``classical'' Curie-Weiss-Potts model (see Theorem 2.1 of \cite{EllisWang90}) is obtained by taking $E=\{1,\ldots q\}$, $\psi_{i}=\1_{[i]}$ and $A(i,j)=1$ for every pair $(i,j)$. 
$\blacksquare$\end{remark}

We point out that the mean-field $XY$ model (see Section \ref{sec-vlasov}) is an example where $\varphi_{\be}$ atteins its maximum on a infinite set of points, and for all of them the Hessian is degenerated. 

\subsection{Plan of the paper}
In Section \ref{sec-proofthrayo} we prove Theorem \ref{th-rayonana}. As we said above, the main ingredient is to define and study the spectrum of the Transfer Operator. We prove this operator has a spectral gap and that the spectral radius is a simple isolated dominating eigenvalue. This allows to define the notion of conformal measure. 

In Section \ref{sec-proofthcwp} we prove Theorem \ref{thmpressquadra}. The main ingredient is to define two auxiliary functions, $\varphi_{\be}$ and $\fibar$, to show that one is always bigger than the other one, but both have the same maxima (arising for the same points). Maximal value for $\fibar$ equals the quadratic pressure but maxima for $\varphi_{\be}$ are easier to detect. 
{ Part of the difficulty in this section comes from our way to define the entropy for measure, as we want to deal with possibly infinite-to-one maps. }

In Section \ref{sec-proofgcwp2} we prove Theorem \ref{th-GCWP}. The main trick is the Hubbard-Stratonovich formula and then the Laplace method, as in \cite{Leplaideur-Watbled1}. 

In Section \ref{sec-vlasov} we discuss an application to the mean-field $XY$ model. 


\section{Proof of Theorem \ref{th-rayonana}}\label{sec-proofthrayo}

\subsection{Properties for the Transfer Operator with infinite alphabet}

\subsubsection{First  spectral properties: $\CL_{\phi}$ is quasi-compact}
The function $A$ is continuous thus uniformly continuous (since $E\times E$ is compact) and with values in $\{0,1\}$. Therefore, there exists $\eps_{A}\in\, ]0,1[$ such that for any $u$, $u'$, $t$ and $t$' in $E$ satisfying $d(u,u')<\eps_{A}$ and $d(t,t')<\eps_{A}$, 
$$A(t,u)=A(t',u').$$

\begin{lemma}
\label{opagibien}
$\CL_{\phi}$ acts on $\CC^{0}(\Om)$ and on $\Lipom$. 
\end{lemma}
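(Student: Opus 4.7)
The plan is to verify the two assertions separately, both hinging on the uniform-continuity constant $\eps_{A}\in\,]0,1[$ introduced just above the lemma and on the dilation identity $d_{\Om}(t\om,t\om')=\frac12 d_{\Om}(\om,\om')$, which specialises the general formula $d_{\Om}(ax,ay)=2^{-n}d_{\Om}(x,y)$ to $a=t\in\Om_{1}$.

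For continuity, I take $f\in\CC^{0}(\Om)$ and a sequence $\om^{(n)}\to\om$. Once $n$ is large enough that $d(\om^{(n)}_{0},\om_{0})<\eps_{A}$, the transition factor satisfies $A(t,\om^{(n)}_{0})=A(t,\om_{0})$ for every $t\in E$, so it is eliminated from the issue. For each fixed $t$, $t\om^{(n)}\to t\om$ in $\Om$; by continuity of $\phi$ and $f$ the integrand converges pointwise in $t$, while being uniformly bounded by $e^{\|\phi\|_{\infty}}\|f\|_{\infty}$. Dominated convergence then gives $\CL_{\phi}(f)(\om^{(n)})\to\CL_{\phi}(f)(\om)$.

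For the Lipschitz claim, I fix $f\in\Lipom$ and estimate $|\CL_{\phi}(f)(\om)-\CL_{\phi}(f)(\om')|$ by distinguishing the two cases $d(\om_{0},\om'_{0})<\eps_{A}$ and $d(\om_{0},\om'_{0})\ge\eps_{A}$. In the first case, $A(t,\om_{0})=A(t,\om'_{0})$, so the difference reduces to the integral of $|e^{\phi(t\om)}f(t\om)-e^{\phi(t\om')}f(t\om')|$; splitting this via $(e^{a}-e^{b})f(t\om)+e^{b}(f(t\om)-f(t\om'))$ and using $|e^{a}-e^{b}|\le e^{\max(a,b)}|a-b|$, the Lipschitz bounds on $\phi$ and $f$, and the dilation identity, one obtains a bound of the form $C_{1}\,d_{\Om}(\om,\om')$ with $C_{1}=\tfrac12 e^{\|\phi\|_{\infty}}(\Lip(\phi)\|f\|_{\infty}+\Lip(f))$. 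In the second case, $d_{\Om}(\om,\om')\ge\eps_{A}/2$, so the crude bound $|\CL_{\phi}(f)(\om)-\CL_{\phi}(f)(\om')|\le 2\|\CL_{\phi}f\|_{\infty}\le 2e^{\|\phi\|_{\infty}}\|f\|_{\infty}$ already produces a Lipschitz quotient bounded by $C_{2}=4e^{\|\phi\|_{\infty}}\|f\|_{\infty}/\eps_{A}$. Taking $\max(C_{1},C_{2})$ yields a global Lipschitz constant for $\CL_{\phi}(f)$.

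There is no genuine obstacle here; the only delicate point is the $\{0,1\}$-valued factor $A$, which is not Lipschitz but is locally constant by the $\eps_{A}$-property, and this is handled cleanly by the two-case dichotomy since the crude sup-norm bound already dominates the Lipschitz quotient in the regime $d_{\Om}(\om,\om')\ge\eps_{A}/2$.
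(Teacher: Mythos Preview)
Your proof is correct, but the Lipschitz half takes a genuinely different route from the paper. The paper invokes assumption (A2) directly: the map $x\mapsto A(t,x_{0})$ is Lipschitz on $\Om$ with constant at most $2\Lip(A)$, so the integrand $e^{\phi(t\cdot)}A(t,\cdot_{0})f(t\cdot)$ is a product of three Lipschitz functions, and the standard product bound $\Lip(fg)\le\|f\|_{\infty}\Lip(g)+\|g\|_{\infty}\Lip(f)$ gives an explicit Lipschitz constant for $\CL_{\phi}(f)$ in one stroke. You instead bypass (A2) entirely and rely only on (A1) via the $\eps_{A}$ local-constancy, splitting into the regimes $d(\om_{0},\om'_{0})<\eps_{A}$ and $d(\om_{0},\om'_{0})\ge\eps_{A}$. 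This buys you a proof that would survive even without (A2), at the cost of a less explicit constant (the $4/\eps_{A}$ factor). Your side remark that $A$ ``is not Lipschitz'' is a misstatement of the hypotheses---(A2) says exactly that it is---but nothing in your argument depends on that remark, so the proof stands.
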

\begin{proof}
Let $f$ be in $\CC^{0}(\Om)$.
The function $x\mapsto e^{\phi(tx)}A(t,x_{0})f(tx)$ is continuous on $\Om$ 
and 
$$|e^{\phi(tx)}A(t,x_{0})f(tx)|\leq e^{\|\phi\|_\infty}\|f\|_\infty,$$
thus by the dominated convergence theorem $\CL_{\phi}(f)$ is continuous on $\Om$. 
Moreover $\CL_{\phi}$ acts continuously on $\CC^{0}(\Om)$ with operator norm
$$\|\CL_{\phi}\|_\infty\leq e^{\|\phi\|_\infty}.$$
Now let $f$ be in $\Lipom$. Notice that for any $t$ in $E$ and $x$, $y$ in $\Om$, 
$$d_\Om(tx,ty)=\frac{1}{2}d_\Om(x,y),$$
so that the function $f_t:x\mapsto f(tx)$ is Lipschitz with $\Lip(f_t)\leq \frac{1}{2}\Lip(f)$.
It is easy to show that if $\phi$ is Lipschitz, then $e^\phi$ is Lipschitz with 
$$\Lip(e^\phi)\leq e^{\|\phi\|_\infty}\Lip(\phi).$$
Notice also that for any $t$ in $E$, the map $A_t:x\mapsto A(t,x_0)$ is Lipschitz with 
$$\Lip(A_t)\leq 2\Lip(A).$$
As the product of two Lipschitz functions $f$ and $g$ is Lipschitz 
with 
$$\Lip(fg)\leq \|f\|_\infty \Lip (g) + \|g\|_\infty \Lip (f),$$
we easily deduce that $\cal L_\phi(f)$ is Lipschitz and that $\CL_{\phi}$ acts continuously on $\Lipom$.  
\end{proof}

\begin{lemma}
\label{lem-spectralradiusope}
The spectral radius  on $\CC^{0}(\Om)$ satisfies $\log r_{\phi}=\disp\lim_{\ninf}\frac1n\log||\CL_{\phi}^{n}(\BBone)||_{\8}$.
\end{lemma}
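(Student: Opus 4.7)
The plan is to reduce the statement to Gelfand's formula for the spectral radius, $r_\phi = \lim_n \|\CL_\phi^n\|_\infty^{1/n}$, by showing that the operator norm $\|\CL_\phi^n\|_\infty$ (on $\CC^0(\Om)$) is attained at the constant function $\BBone$. The key observation is positivity of the transfer operator.

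First, I would note that $\CL_\phi$ is a positive operator on $\CC^0(\Om)$: if $f\geq 0$, then in the defining integral the factor $e^{\phi(t\om)}$ is strictly positive and $A(t,\om_0)\in\{0,1\}$ by assumption (A1), so the integrand is non-negative and $\CL_\phi(f)(\om)\geq 0$. By iteration, $\CL_\phi^n$ is positive for every $n\geq 1$. For an arbitrary $f\in\CC^0(\Om)$, the bound $-\|f\|_\infty\BBone\leq f\leq \|f\|_\infty\BBone$ combined with positivity and linearity of $\CL_\phi^n$ yields
$$|\CL_\phi^n(f)|\leq \|f\|_\infty\,\CL_\phi^n(\BBone),$$
hence $\|\CL_\phi^n(f)\|_\infty\leq \|f\|_\infty\|\CL_\phi^n(\BBone)\|_\infty$. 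Taking the supremum over $\|f\|_\infty\leq 1$ gives $\|\CL_\phi^n\|_\infty\leq\|\CL_\phi^n(\BBone)\|_\infty$, while the reverse inequality is immediate since $\|\BBone\|_\infty=1$. Thus $\|\CL_\phi^n\|_\infty=\|\CL_\phi^n(\BBone)\|_\infty$ for every $n$.

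Finally, I invoke Gelfand's formula: for any bounded operator on a Banach space, the spectral radius equals $\lim_n\|\CL_\phi^n\|_\infty^{1/n}$, the limit existing (and coinciding with the infimum) by submultiplicativity, i.e.\ subadditivity of $n\mapsto\log\|\CL_\phi^n\|_\infty$ (this is guaranteed by Lemma \ref{opagibien}, which gives $\|\CL_\phi\|_\infty\leq e^{\|\phi\|_\infty}<+\infty$). Substituting the identity from the previous step yields
$$\log r_\phi=\lim_{\ninf}\frac1n\log\|\CL_\phi^n(\BBone)\|_\infty,$$
which is the desired formula. There is essentially no obstacle: the only nontrivial ingredient is the positivity of $\CL_\phi$, which reduces the operator norm to the value on $\BBone$ and then routine Banach-space facts finish the argument.
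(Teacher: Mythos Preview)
Your proof is correct and follows essentially the same approach as the paper. The paper also recalls Gelfand's formula $r_\phi=\lim_n\|\CL_\phi^n\|_\infty^{1/n}$ and then shows $\|\CL_\phi^n\|_\infty=\|\CL_\phi^n(\BBone)\|_\infty$ using positivity; the only cosmetic difference is that the paper bounds $|\CL_\phi^n(f)(x)|$ directly via the explicit integral representation of $\CL_\phi^n$, whereas you phrase it abstractly through $-\|f\|_\infty\BBone\le f\le\|f\|_\infty\BBone$ and monotonicity.
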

\begin{proof}
We recall that $r_{\phi}:=\disp\lim_{n\to +\infty}\|\CL^{n}_{\phi}\|_\infty^{1/n}
=\inf_{n\geq 1}\|\CL^{n}_{\phi}\|_\infty^{1/n}$. 
For any $f$ in $\CC^{0}(\Om)$, $x\in\Om$,
$$|\CL_{\phi}(f)(x)|\leq\int_E e^{\phi(tx)}A(t,x_{0})|f(tx)|\,d\rho(t)
\leq \|f\|_\infty \CL_{\phi}(\BBone)(x)\leq \|f\|_\infty \|\CL_{\phi}(\BBone)\|_\infty,$$
hence
$\|\CL_{\phi}\|_\infty= \|\CL_{\phi}(\BBone)\|_\infty$.
For $n\in\bb N$,
$$\CL_{\phi}^n(f)(x)=\int_{\Om_{n}(x_0)} e^{S_n(\phi)(tx)}f(tx)\rho^{\otimes n}(dt),$$
where $t$ inside the integral stands for $t=t_0\cdots t_{n-1}$
and $\rho^{\otimes n}(dt)=\prod _{i=0}^{n-1}\rho(dt_i)$.
Then $\|\CL_{\phi}^n\|_\infty= \|\CL_{\phi}^n(\BBone)\|_\infty$ for the same reason 
as for $n=1$, hence
$r_{\phi}=\disp\lim_{n\to +\infty}\|\CL^{n}_{\phi}(\BBone)\|_\infty^{1/n}$.
\end{proof}

As the measure $\rho$ is of full support and $A$ satisfies the hypothesis (A3) it is easy
to show that for any $x$ in $\Om$,
$\CL_{\phi}(\BBone)(x)$ is strictly positive.
One then has that $$ \disp\wh\CL^{*}_{\phi}(\mu):=\frac{\CL^{*}_{\phi}(\mu)}{\CL^{*}_{\phi}(\mu)(\BBone)}$$
is a probability measure for any $\mu\in M(\Om)$.
The map $\disp\wh\CL^{*}_{\phi}:M(\Om)\to M(\Om)$ is continuous on the compact (for the weak*-topology) convex space $M(\Om)$ therefore by the Schauder-Tychonoff theorem, there exists a probability measure $\nu_{\phi}$ such that 
$$\wh\CL_{\phi}^{*}(\nu_{\phi})=\nu_{\phi}.$$
This measure is either called the \emph{conformal measure} or the \emph{eigen-measure}. 
In the following we set $\l_{\phi}:=\CL^{*}_{\phi}(\nu_\phi)(\BBone)=\disp\int\CL_{\phi}(\BBone)\,d\nu_{\phi}$. 
\begin{proposition}
\label{prop-rlambda}
With previous notations $\disp r_{\phi}=\l_{\phi}$. 
Moreover for any $x\in\Om$, 
\begin{equation}\label{limclnr}
\log r_{\phi}=\lim_{\ninf}\frac1n\log\CL^{n}_{\phi}(\1)(x).
\end{equation}
\end{proposition}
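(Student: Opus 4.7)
Both conclusions reduce to a uniform bounded distortion estimate for the iterates $\CL_\phi^n(\BBone)$: there exists $K>0$ (depending only on $\phi$, $N$, $\rho$) such that
\[
(\star)\qquad \CL_\phi^n(\BBone)(x) \;\le\; K\,\CL_\phi^n(\BBone)(y), \qquad \forall\, x,y\in\Om,\ \forall\, n\ge N.
\]
Granted $(\star)$, the eigen-measure relation $\CL_\phi^{*}\nu_\phi=\l_\phi\nu_\phi$ gives $\l_\phi^n=\int\CL_\phi^n(\BBone)\,d\nu_\phi\le \|\CL_\phi^n(\BBone)\|_\infty$, hence $\l_\phi\le r_\phi$ by Lemma~\ref{lem-spectralradiusope}; integrating $(\star)$ against $\nu_\phi$ produces the reverse inequality $\|\CL_\phi^n(\BBone)\|_\infty\le K\l_\phi^n$, so $r_\phi=\l_\phi$. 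Finally $(\star)$ traps $\CL_\phi^n(\BBone)(x)$ between $K^{-1}\|\CL_\phi^n(\BBone)\|_\infty$ and $\|\CL_\phi^n(\BBone)\|_\infty$, from which the pointwise limit \eqref{limclnr} follows.

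\textbf{Local distortion.} When $d(x_0,y_0)<\eps_A$, the uniform continuity property of $A$ forces $\Om_n(x_0)=\Om_n(y_0)$, so $\CL_\phi^n(\BBone)(x)$ and $\CL_\phi^n(\BBone)(y)$ are integrals of $e^{S_n\phi(tx)}$ and $e^{S_n\phi(ty)}$ over the same domain. The identity $d_\Om(\sigma^k(tx),\sigma^k(ty))=2^{-(n-k)}d_\Om(x,y)$ for $0\le k\le n-1$, together with $\sum_{k=1}^{n}2^{-k}\le 1$ and the Lipschitz continuity of $\phi$, yields $|S_n\phi(tx)-S_n\phi(ty)|\le \Lip(\phi)\Diam(\Om)$ uniformly in $t$. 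This produces a local distortion constant $K_1:=e^{\Lip(\phi)\,\Diam(\Om)}$ valid for every such pair $(x,y)$.

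\textbf{Global distortion via mixing.} For arbitrary $x,y\in\Om$, I exploit the semigroup identity
\[
\CL_\phi^{n+N}(\BBone)(y)\;=\;\int_{\Om_N(y_0)} e^{S_N\phi(sy)}\,\CL_\phi^n(\BBone)(sy)\,d\rho^{\otimes N}(s),
\]
and restrict the integration to $s$ near a mixing path. By (A3) applied to $(a,b)=(x_0,y_0)$ with $n=N$, there exist $z_1,\ldots,z_{N-1}\in E$ such that $A(x_0,z_1)A(z_1,z_2)\cdots A(z_{N-1},y_0)=1$; continuity of $A$ yields $\delta\in(0,\eps_A)$ so that every $\tilde s$ in $B_\delta:=B(x_0,\delta)\times B(z_1,\delta)\times\cdots\times B(z_{N-1},\delta)$ still belongs to $\Om_N(y_0)$. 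On $B_\delta$ one has $d(\tilde s_0,x_0)<\eps_A$, so the local step gives $\CL_\phi^n(\BBone)(\tilde s y)\ge K_1^{-1}\CL_\phi^n(\BBone)(x)$; combining with $e^{S_N\phi(\tilde s y)}\ge e^{-N\|\phi\|_\infty}$ yields $\CL_\phi^{n+N}(\BBone)(y)\ge c\,\CL_\phi^n(\BBone)(x)$, and coupled with the semigroup bound $\CL_\phi^n(\BBone)(x)\le e^{N\|\phi\|_\infty}\|\CL_\phi^{n-N}(\BBone)\|_\infty$ (taking the supremum of the previous inequality in $x$) this gives $(\star)$. The principal obstacle is making $c:=e^{-N\|\phi\|_\infty}K_1^{-1}\rho^{\otimes N}(B_\delta)$ uniform in $(x_0,y_0)$: the mixing path and the radius $\delta$ a priori depend on the pair, so I cover the compact space $E\times E$ by finitely many open neighborhoods on each of which a single path and common radius $\delta_i$ work (by continuity of $A$), and invoke hypothesis~(H) together with the full support of $\rho$ to obtain a uniform positive lower bound for $\rho(B(x_0,\delta_i))$ as $x_0$ ranges over the compact $E$.
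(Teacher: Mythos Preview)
Your proof is correct and follows the same strategy as the paper: establish a uniform bounded-distortion inequality $(\star)$ for $\CL_\phi^n(\BBone)$ via a local estimate plus mixing, then integrate against $\nu_\phi$ and invoke Lemma~\ref{lem-spectralradiusope}. Your local distortion step is exactly the content of the paper's Lemma (it only needs $d(x_0,y_0)<\eps_A$, which you correctly isolate), and the conclusion from $(\star)$ is identical.

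The one place where your argument differs in organization is the global step. The paper obtains $\CL_\phi^n(\BBone)(x)\le e^{C}\CL_\phi^n(\BBone)(y)$ directly at the \emph{same} index $n$, by splitting $n=N_2+N_1+m$, integrating over $\mathbf t\in\Om_{N_2}$ and $\mathbf u\in\Om_{N_1}(x_0)$, and then inserting an extra integration over $\mathbf v\in\Om_{N_1}(y_0)$ close to a mixing path so that the local estimate applies to $\mathbf t\mathbf u x$ versus $\mathbf t\mathbf v y$. You instead prove the shifted inequality $\CL_\phi^{n+N}(\BBone)(y)\ge c\,\CL_\phi^n(\BBone)(x)$ and then undo the shift with the trivial semigroup upper bound $\CL_\phi^n(\BBone)(x)\le e^{N\|\phi\|_\infty}\|\CL_\phi^{n-N}(\BBone)\|_\infty$; combining at index $n\ge N$ yields $(\star)$. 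Your route is a bit more elementary (no triple integral, no auxiliary $N_2$), at the cost of an index-juggling step.

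One minor simplification: the finite cover of $E\times E$ is unnecessary. Since any $\delta<\eps_A$ works uniformly (this is precisely the defining property of $\eps_A$), you may fix $\delta=\eps_A/2$ once and for all; then $\rho^{\otimes N}(B_\delta)\ge m_A^N$ with $m_A:=\inf_{t\in E}\rho(B(t,\eps_A/2))>0$ by hypothesis~\textbf{(H)} and compactness, regardless of where the mixing path lies. This gives a uniform $c$ without any covering argument.
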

\begin{proof}
Lipschitz regularity for $\phi$ yields that the Bowen condition holds: for any $n$, for any $x$ and $y$ satisfying $x_{i}=y_{i}$ for $i=0,\ldots n-1$,
\begin{equation}
\label{equ-bowen0}
|S_{n}(\phi)(x)-S_{n}(\phi)(y)|\leq D\,\Lip(\phi),
\end{equation}
where $D=\Diam(\Om)=\max\{\dom(x,y); x,y\in\Om\}$.
Indeed if $x_{i}=y_{i}$ for $i=0,\ldots n-1$ then $\dom(\sigma^k x,\sigma^k y)=2^k \dom(x,y)$
for $0\leq k\leq n$, so that
\begin{multline*}|S_{n}(\phi)(x)-S_{n}(\phi)(y)|\leq \sum_{k=0}^{n-1}\Lip(\phi) 2^k \dom(x,y)
\leq 2^n \Lip(\phi)d_\Om(x,y)\\
=\Lip(\phi)d_\Om(\sigma^n(x),\sigma^n(y)).
\end{multline*}
\begin{lemma}\label{lem-prop-rlambda}
For any $x,y$ in $\Om$ such that $\dom(x,y)<\frac{\eps_{A}}{2}$, for any $n\geq 2$,
\begin{equation}
\label{equ-bowen2}
e^{-D\Lip(\phi)}\le \frac{\CL_{\phi}^{n}(\BBone)(x)}{\CL_{\phi}^{n}(\BBone)(y)}\le e^{D\Lip(\phi)}.
\end{equation} 
\end{lemma}
\begin{proof}
If $\dom(x,y)<\frac{\eps_{A}}{2}$ then $d(x_0,y_0)<\eps_A$ so that $\Om_n(x_0)=\Om_n(y_0)$.
For any $a\in\Om_n(x_0)$, \eqref{equ-bowen0} implies that
$$|S_n(\phi)(ax)-S_n(\phi)(ay)|\leq D\,\Lip(\phi),$$
therefore
$$e^{S_n(\phi)(ay)}e^{-D\Lip(\phi)}\leq e^{S_n(\phi)(ax)}\leq e^{S_n(\phi)(ay)}
e^{D\Lip(\phi)}$$
and by integrating over $\Om_n(x_0)$ one gets
$$\CL_{\phi}^n(\BBone)(y)e^{-D\Lip(\phi)}\leq \CL_{\phi}^n(\BBone)(x)\leq 
\CL_{\phi}^n(\BBone)(y)e^{D\Lip(\phi)}.$$
\end{proof}

We pick $N_{1}$ sufficiently big such that Assumption (A3) holds, that is 
$$\forall a, b \in E,\ \Om_{N_1-1}(a,b)\neq\emptyset.$$

Then, we choose $N_{2}$ sufficiently big such that $2^{-N_{2}}<\ds\frac{\eps_{A}}{2\Diam(\Om)}$. 

\begin{claim}
There exists $C=C(\phi)>0$ such that for every $n>N_{1}+N_{2}$, for every $x$ and $y$ in $\Om$, 
\begin{equation}
\label{equ-bowen2-2}
e^{-C}\le \frac{\CL_{\phi}^{n}(\BBone)(x)}{\CL_{\phi}^{n}(\BBone)(y)}\le e^{C}.
\end{equation} 
\end{claim}
\begin{proof}[Proof of the Claim]
We pick $x$ and $y$ in $\Om$. We denote by $\mbf t$ an element $(t_{1},\ldots, t_{N_{2}})$ of $\Om_{N_{2}}$ and by $\mbf u$ and $\mbf v$ some elements of $\Om_{N_{1}}$. We set $N:=N_{2}+N_{1}$ and $m:=n-N$. 
$$\begin{aligned}
\CL_{\phi}^{n}&(\BBone)(x)=\CL_{\phi}^{N}\circ \CL_{\phi}^{m}(\BBone)(x)\\
&=\iint e^{S_{N}(\phi)(\mbf t\mbf u x)}A(t_{N_2},u_1)A(u_{N_1},x_0)\CL_{\phi}^{m}(\BBone)(\mbf t\mbf u x)\,d\rho^{\otimes N_{2}}(\mbf t)d\rho^{\otimes N_{1}}(\mbf u)\\
&=\iint e^{S_{N_2}(\phi)(\mbf t\mbf u x)}
e^{S_{N_1}(\phi)(\mbf u x)}A(t_{N_2},u_1)A(u_{N_1},x_0)\CL_{\phi}^{m}(\BBone)(\mbf t\mbf u x)\,d\rho^{\otimes N_{2}}(\mbf t)d\rho^{\otimes N_{1}}(\mbf u)
\end{aligned}$$
where we used the identity
$$S_{N_1+N_2}(\phi)=S_{N_2}(\phi)+S_{N_1}(\phi)\circ\sigma^{N_2}.$$
Let us set $m_A=\inf_{t\in E}\rho\pare{B(t,\eps_A)}$, which is positive thanks to hypothesis \textbf{(H)}.
As $\Om_{N_1-1}(u_1,y_0)\neq\emptyset$ we can pick $\underline{u}$ in it.
If $\mbf v\in\Om_{N_1}$ is such that $d(v_1,u_1)<\eps_A$ and $d(v_i,\underline{u}_{i-1})<\eps_A$ for every $2\leq i\leq N_1$, then 
$A(v_{N_1},y_0)=1$, that is $\mbf v\in\Om_{N_1}(y_0)$.
We deduce that 
$$\int_{\Om_{N_1}}A(v_{N_1},y_0)\BBone_{B(u_1,\eps_A)}(v_1)d\rho^{\otimes N_{1}}(\mbf v)\geq m_A^{N_1}.$$
Therefore 
\begin{multline*}
\CL_{\phi}^{n}(\BBone)(x)\leq \frac1{m_{A}^{N_{1}}}
\iiint e^{S_{N_2}(\phi)(\mbf t\mbf u x)}
e^{S_{N_1}(\phi)(\mbf u x)}A(t_{N_2},u_1)A(u_{N_1},x_0)\CL_{\phi}^{m}(\BBone)(\mbf t\mbf u x)\\
A(v_{N_1},y_0)\BBone_{B(u_1,\eps_A)}(v_1)d\rho^{\otimes N_{1}}(\mbf v)
d\rho^{\otimes N_{2}}(\mbf t)d\rho^{\otimes N_{1}}(\mbf u).
\end{multline*}
From \eqref{equ-bowen0} we deduce that 
$$e^{S_{N_2}(\phi)(\mbf t\mbf u x)}\leq e^{D\Lip(\phi)}e^{S_{N_2}(\phi)(\mbf t\mbf v y)}.$$
As $d_\Om(\mbf t\mbf u x,\mbf t\mbf v y)=2^{-N_2}d_\Om(\mbf u x,\mbf v y)<\dfrac{\eps_A}{2}$
we know from \eqref{equ-bowen2} that
$$\CL_{\phi}^{m}(\BBone)(\mbf t\mbf u x)
\leq e^{D\Lip(\phi)}\CL_{\phi}^{m}(\BBone)(\mbf t\mbf v y).$$
As $e^{S_{N_1}(\phi)(\mbf u x)}\leq e^{2N_1\|\phi\|_\infty }e^{S_{N_1}(\phi)(\mbf v y)}$
we deduce eventually that
\begin{eqnarray*}
\CL_{\phi}^{n}(\BBone)(x)&\le& \frac{e^{2D\Lip(\phi)+2N_1\|\phi\|_\infty}}{m_{A}^{N_{1}}}\iiint e^{S_{N_{2}}(\phi)(\mbf t\mbf v y)}
e^{S_{N_1}(\phi)(\mbf v y)}\CL_{\phi}^{m}(\BBone)(\mbf t\mbf v y)A(t_{N_2},u_1)\\
&&\hskip 1cm A(u_{N_1},x_0)A(v_{N_1},y_0)\BBone_{B(u_1,\eps_A)}(v_1)\,d\rho^{\otimes N_{1}}(\mbf v)\,d\rho^{\otimes N_{2}}(\mbf t)d\rho^{\otimes N_{1}}(\mbf u)\\
&\le& \frac{e^{2D\Lip(\phi)+2N_1\|\phi\|_\infty}}{m_{A}^{N_{1}}}\iint e^{S_{N}(\phi)(\mbf t\mbf v y)}\CL_{\phi}^{m}(\BBone)(\mbf t\mbf v y)\\
&&\hskip 5cm A(t_{N_2},v_1)A(v_{N_1},y_0)\,d\rho^{\otimes N_{2}}(\mbf t)d\rho^{\otimes N_{1}}(\mbf v)\\
&&\text{ where we use }A(t_{N_2},v_{1})=A(t_{N_2},u_{1}),\\
&\le &\frac{e^{2D\Lip(\phi)+2N_1\|\phi\|_\infty}}{m_{A}^{N_{1}}}\CL^{n}_{\phi}(\BBone)(y).
\end{eqnarray*}
Exchanging $x$ and $y$  we get the reverse inequality. 
\end{proof}
We can now finish the proof of Proposition \ref{prop-rlambda}. 
First we recall that according to Lemma \ref{lem-spectralradiusope},
$\log r_{\phi}=\disp\lim_{\ninf}\frac1n\log||\CL_{\phi}^{n}(\BBone)||_{\8}$.
As $\Om$ is compact there exists $x_n\in\Om$ such that $\|\CL_{\phi}^{n}(\BBone)||_{\8}=\CL_{\phi}^{n}(\BBone)(x_n)$.
For any $x$ in $\Om$ and $n>N_1+N_2$ we get from \eqref{equ-bowen2-2} that
\begin{equation}\label{equ-bowen2-3}
e^{-C}\CL_{\phi}^{n}(\BBone)(x_n)\leq  \CL_{\phi}^{n}(\BBone)(x) \leq e^C \CL_{\phi}^{n}(\BBone)(x_n),
\end{equation}
therefore
$$-\frac{C}{n}+\frac{1}{n}\log \CL_{\phi}^{n}(\BBone)(x_n)\leq \frac{1}{n}\log \CL_{\phi}^{n}(\BBone)(x) \leq \frac{C}{n}+\frac{1}{n}\log \CL_{\phi}^{n}(\BBone)(x_n),$$
and taking the limit we get \eqref{limclnr}.
Now integrating \eqref{equ-bowen2-3} we get
$$e^{-C}\CL^{n}_{\phi}(\BBone)(x_n)\le \int \CL^{n}_{\phi}(\BBone)(x)\,d\nu_{\phi}(x)\le e^{C}\CL^{n}_{\phi}(\BBone)(x_n),$$
and since $\l^{n}_{\phi}=\disp\int \CL^{n}_{\phi}(\BBone)(x)\,d\nu_{\phi}(x)$
we get $\log\l_{\phi}=\log r_{\phi}$. 
\end{proof}

We claim that we can apply the Ionescu-Tulcea \& Marinescu  Theorem\footnote{ITM Theorem in short.} (see \cite{ITM}, see also \cite{broise}, Theorem 4.2 or
\cite{Norman}, Theorem 2.1) to get a spectral decomposition of 
the operator $\disp\wt\CL_{\phi}:=\frac1{r_{\phi}}\CL_{\phi}$. 
Indeed the spaces $\CC^{0}(\Om)$, $\CC^{+1}(\Om)$ satisfy the first hypothesis of ITM Theorem, which is
\begin{enumerate}
\item if $f_n\in \CC^{+1}(\Om)$, $f\in \CC^{0}(\Om)$, $\Lim{n}{\infty}\|f_n-f\|_\infty=0$,
and $\|f_n\|_L\leq C$ for all $n$, then $f\in \CC^{+1}(\Om)$ and $\|f\|_L\leq C$,
\end{enumerate}
and $\disp\wt\CL_{\phi}$ satisfies the three following hypothesis:
\begin{enumerate}[resume]
\item $\sup_{n\in\bb N} \{\|\disp\wt\CL_{\phi}^n(f)\|_\infty,\,f\in \CC^{+1}(\Om),\,\|f\|_L\leq 1\}<+\infty$,
\item there exists $a\in ]0,1[$, $b>0$ and $n_{0}\geq 1$ such that for any $f\in \Lipom$,
$$\|\disp\wt\CL_{\phi}^{n_0}(f)\|_L\leq a\|f\|_L+b\|f\|_\infty,$$
\item if $V$ is a bounded subset of $(\Lipom,\|\cdot\|_L)$, then $\disp\wt\CL_{\phi}^{n_0}(V)$
has compact closure in $(\CC^{0}(\Om),\|\cdot\|_\infty)$.
\end{enumerate}

We sketch the proof of (3) and let the reader check the other conditions.

\bigskip

\textit{Proof of (3).}
A direct computation yields that for $f$ Lipschitz continuous
\begin{multline*}
\left|\CL_{\phi}^{n}(f)(x)-\CL_{\phi}^{n}(f)(y)\right|\leq
\Lip(f)\frac{\dom(x,y)}{2^{n}}\CL_{\phi}^{n}(\BBone)(x)\\
+e^{n\|\phi\|_\infty }\|f\|_\infty(\Lip(\phi)+2\Lip(A))\dom(x,y).
\end{multline*}
From \eqref{equ-bowen2-2} we know that for any $n>N_1+N_2$, for any $x$ in $\Om$,
$$e^{-C}\leq \frac{\CL_{\phi}^{n}(\BBone)(x)}{\lambda_\phi^n}\leq e^C,$$
hence as $r_\phi=\lambda_\phi$ we have
\begin{multline*}
\left|\wt\CL_{\phi}^{n}(f)(x)-\wt\CL_{\phi}^{n}(f)(y)\right|\leq
\Lip(f)\frac{\dom(x,y)}{2^{n}}e^C\\
+\frac{e^{n\|\phi\|_\infty }}{r_\phi^n}\|f\|_\infty(\Lip(\phi)+2\Lip(A))\dom(x,y).
\end{multline*}
Therefore
$$\|\wt\CL_{\phi}^{n}(f)\|_L=\Lip(\wt\CL_{\phi}^{n}(f))+\|\wt\CL_{\phi}^{n}(f)\|_\infty
\leq A_n\Lip(f)+B_n\|f\|_\infty\leq  A_n\|f\|_L+B_n\|f\|_\infty$$
where $A_n=\dfrac{e^C}{2^n}$
and $B_{n}=\dfrac{e^{n\|\phi\|_\infty }}{r_\phi^n}(\Lip(\phi)+2\Lip(A)+1)$. 
Picking any $a$ in $]0,1[$ and adjusting $n$ such that $2^{-n}e^{C}<a$ one gets the result. \hfill\qed

In particular, and considering $\CL_{\phi}$ as an operator on $\Lipom$, the proof of the ITM Theorem  shows (see \cite[Lem. 4.7]{broise}, or \cite[Lemma 2.4]{Norman}) that  $r_{\phi}$ is an eigenvalue for $\CL_{\phi}$ associated to the function in $\Lipom$ defined by 
$$G_{\phi}:=\lim_{\ninf}\frac1n\sum_{k=0}^{n-1}\wt\CL_{\phi}^{k}(\BBone).$$
Furthermore, we have the following decomposition 
$$\wt\CL_{\phi}:=\sum e^{i\theta_{j}}\Pi_{j}+\Psi$$
where the $\Pi_{j}$'s are (finitely many) projectors with finite rank, the $\theta_{j}$'s are real numbers and $\Psi$ has spectral radius strictly smaller than 1. Moreover, 
$$\Pi_{k}\Pi_{j}=0\text{ if }j\ne k\text{ and }\Psi\Pi_{j}=\Pi_{j}\Psi=0.$$ 

\subsubsection{Second decomposition of the spectrum: $r_{\phi}$ is 
the unique eigenvalue with ma\-xi\-mal modulus and its eigenspace is of dimension one}
For simplicity we set $\theta_{0}=0$. We shall see that mixing yields more precise results on the spectral decomposition of $\CL_{\phi}$. 

\begin{lemma}
\label{lem-preimdense}
For any $x\in\Om$, $\disp\bigcup_{n\ge 0}\s^{-n}(\{x\})$ is dense in $\Om$. 
\end{lemma}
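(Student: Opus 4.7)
The plan is to use the mixing assumption (A3) in the most natural way: given a target point $w\in\Om$ and a tolerance $\eps>0$, construct a preimage of $x$ that matches $w$ on a very long prefix.

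First, fix $x\in\Om$, $w\in\Om$ and $\eps>0$. Since $d_\Om(u,v)\leq \sum_{i\geq k}\frac{D_{E}}{2^{i+1}}=\frac{D_{E}}{2^{k}}$ whenever $u,v$ agree on the first $k$ coordinates (here $D_E$ denotes the diameter of $E$), choose $k$ so large that $D_{E}/2^{k}<\eps$.

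Next, apply (A3) with $a=w_{k-1}$, $b=x_{0}$, and $n=N$: there exist $z_{1},\ldots,z_{N-1}\in E$ with
$$A(w_{k-1},z_{1})A(z_{1},z_{2})\cdots A(z_{N-1},x_{0})=1.$$
Define
$$y:=w_{0}\,w_{1}\cdots w_{k-1}\,z_{1}\,z_{2}\cdots z_{N-1}\,x_{0}\,x_{1}\,x_{2}\cdots.$$
All transitions inside the block $w_{0}\cdots w_{k-1}$ are admissible since $w\in\Om$; the junction $(w_{k-1},z_{1})$, the interior of $(z_{i},z_{i+1})$, the junction $(z_{N-1},x_{0})$ and all transitions inside $x_{0}x_{1}\cdots$ are admissible by construction and because $x\in\Om$. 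Hence $y\in\Om$. Moreover $y$ and $w$ coincide on the first $k$ symbols, so $d_{\Om}(y,w)<\eps$, and $\sigma^{k+N-1}(y)=x$, so $y\in\sigma^{-(k+N-1)}(\{x\})\subset\bigcup_{n\geq 0}\sigma^{-n}(\{x\})$.

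Since $w\in\Om$ and $\eps>0$ were arbitrary, the union of preimages of $x$ is dense in $\Om$. The argument has no real obstacle; the only point to keep in mind is that $w_{k-1}$ may be any element of $E$, which is exactly why the uniform mixing statement (A3) (valid for every pair $a,b\in E$) is needed rather than a weaker pointwise statement.
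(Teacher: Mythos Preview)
Your proof is correct and is essentially identical to the paper's own argument: both fix a long prefix of the target point, invoke (A3) to obtain a connecting word of length $N-1$ between the last letter of that prefix and $x_{0}$, and concatenate to produce a preimage of $x$ within $\eps$ of the target. The only cosmetic differences are notation (you write $w$ and $k$ where the paper writes $y$ and $n$) and the diameter bound (you use $D_{E}$, the paper uses $\Diam(\Om)\le D_{E}$).
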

\begin{proof}
Let $x$ and $y$ be in $\Om$, let $\eps>0$.
Let $n\in\bb N$ be such that $2^{-n}\Diam(\Omega)<\eps$, and let $z_i=y_i$ for every $i$ in $\interventier{0}{n-1}$, so that 
$d_\Om(y,z)\leq \eps$.
According to assumption \textbf{(A3)} there exist $u_n,\cdots,u_{n+N-2}$ in $E$ such that  
$z:=y_{0}\ldots y_{n-1}u_{n}\ldots u_{n+N-2}x$
belongs to $\Om$.
Then $z$ belongs to $\s^{-(n+N-1)}(\{x\})\cap B(y,\eps)$.
\end{proof}
\begin{remark}
\label{rem-superdensiteunif}
Actually, we have proved a better result: for any $\eps$, there exists $N'=N'(\eps)$ such that for any $y$ and $x$, $B(y,\eps)\cap \s^{-N'}(\{x\})\neq\emptyset$.
$\blacksquare$\end{remark}

\begin{proposition}
\label{prop-spectre}
The spectral radius  $r_{\phi}$ is a simple single dominating eigenvalue. 
The rest of the spectrum for $\CL_{\phi}$ is a compact set strictly inside the disk $\mathbb D(0,r_{\phi})$.
\end{proposition}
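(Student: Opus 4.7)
The plan is to exploit the positivity of $\CL_{\phi}$ together with the density of preimages (Lemma \ref{lem-preimdense}) to upgrade the abstract ITM decomposition recalled just above into a genuine Perron--Frobenius statement. The main workhorse is the normalized operator
$$\wh\CL_{\phi}(F):=\frac{1}{r_{\phi}\,G_{\phi}}\,\CL_{\phi}(G_{\phi}F),$$
which satisfies $\wh\CL_{\phi}(\BBone)=\BBone$ and preserves non-negativity. Before using it I would first check that $G_{\phi}>0$ everywhere: $G_{\phi}\ge 0$ by its construction as a Cesaro limit of $\wt\CL_{\phi}^{k}(\BBone)\ge 0$, and if $G_{\phi}(x)=0$ at some $x$, the eigen-relation $\CL_{\phi}(G_{\phi})(x)=r_{\phi}G_{\phi}(x)=0$ together with the continuity of the integrand and the full support of $\rho$ would force $G_{\phi}\equiv 0$ on $\s^{-1}(\{x\})$; iterating and invoking Lemma \ref{lem-preimdense} this would give $G_{\phi}\equiv 0$, a contradiction.

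Next I would prove simplicity of $r_{\phi}$. If $f\in\Lipom$ satisfies $\CL_{\phi}f=r_{\phi}f$, split $f$ into real and imaginary parts and treat the real case. Set $F:=f/G_{\phi}$, so that $\wh\CL_{\phi}F=F$, and pick $x_{0}\in\Om$ where $F$ attains its maximum $M$. The identity $M=F(x_{0})=\wh\CL_{\phi}F(x_{0})$ combined with $\wh\CL_{\phi}\BBone=\BBone$ and $F\le M$ rewrites as $\int k(t,x_{0})(M-F(tx_{0}))\,d\rho(t)=0$ with non-negative integrand, where $k$ denotes the kernel of $\wh\CL_{\phi}$. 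Hence $F(tx_{0})=M$ for every $t$ with $A(t,(x_{0})_{0})=1$ (a.e.\ first, then everywhere by continuity and full support of $\rho$). Iterating on preimages and using Lemma \ref{lem-preimdense}, $F\equiv M$ on a dense set, hence everywhere, so the eigenspace of $r_{\phi}$ is one-dimensional.

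For the peripheral spectrum I would argue in the same spirit. Suppose $\wt\CL_{\phi}h=e^{i\theta}h$ with $h\not\equiv 0$ and set $H:=h/G_{\phi}$, so $\wh\CL_{\phi}H=e^{i\theta}H$. At a maximum point of $|H|$ the chain $|H|=|\wh\CL_{\phi}H|\le \wh\CL_{\phi}|H|\le |H|\,\wh\CL_{\phi}\BBone=|H|$ is an equality, and the maximum-principle argument of the previous step forces $|H|$ to be constant; normalize to $|H|\equiv 1$ and write $H=e^{i\alpha}$ with $\alpha\in\CC^{0}(\Om,\R/2\pi\Z)$. The equality case in
$$\left|\int k(t,\om)\,e^{i\alpha(t\om)}\,d\rho(t)\right|=1=\int k(t,\om)\,d\rho(t)$$
forces the argument of the integrand to be constant in $t$ on the support, which, combined as above with continuity and full support of $\rho$, yields the cohomological identity
$$\alpha(y)-\alpha(\s y)\equiv \theta\pmod{2\pi}\qquad\text{for every }y\in\Om.$$

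Finally I would use Assumption \textbf{(A3)}: taking $b=a$ in \eqref{equ-mixingA} produces, for every fixed $a\in E$ and every $n\ge N$, an admissible word of length $n$ which concatenated to itself gives a periodic point of $\s$ of period $n$. Applying the cohomological identity to a period-$n$ and a period-$(n+1)$ orbit, with $n\ge N$, yields $n\theta\equiv 0$ and $(n+1)\theta\equiv 0\pmod{2\pi}$, hence $\theta\equiv 0$. So the peripheral spectrum of $\wt\CL_{\phi}$ reduces to $\{1\}$ with one-dimensional eigenspace, and combined with the ITM decomposition the rest of the spectrum of $\CL_{\phi}$ lies in a disk of radius strictly less than $r_{\phi}$. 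The step I expect to be most delicate is the upgrade from $\rho$-a.e.\ identities on $E$ to pointwise identities on $\Om$ (crucially used twice, once for simplicity and once for $|H|$ constant), which is where the continuous-alphabet setting behaves differently from the classical subshift of finite type and where hypothesis \textbf{(H)} together with continuity of $A$ really comes into play.
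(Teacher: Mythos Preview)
Your argument is correct and complete, but it takes a genuinely different route from the paper. The paper does not normalize the operator and run a maximum-principle argument; instead it invokes the abstract Krasnosel'skii theory of positive operators on cones. Concretely, the paper shows that the cone $K$ of non-negative Lipschitz functions is solid and reproducing, then proves (using the same density of preimages from Lemma \ref{lem-preimdense}) that for every $f\in K\setminus\{0\}$ some power $\CL_{\phi}^{p}(f)$ is strictly positive, i.e.\ lies in $\inte K$. This makes $\CL_{\phi}$ strongly positive, hence $u$-positive, and the conclusion (simplicity of $r_{\phi}$ and $|\lambda|<r_{\phi}$ for every other eigenvalue) is then read off from Theorems 2.10, 2.11, 2.13 of \cite{Kras64}.

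What each approach buys: the paper's route is short and structural --- once strong positivity is established, the Perron--Frobenius conclusions come for free from a reference --- but it is less self-contained. Your direct approach via the normalized operator $\wh\CL_{\phi}$, the maximum principle, the cohomological equation, and periodic orbits is the classical hands-on Perron--Frobenius argument; it is longer but fully explicit and transparently shows where the mixing assumption \textbf{(A3)} enters (through the existence of periodic points of coprime periods). One small remark: your preliminary step that $G_{\phi}>0$ is exactly the content of Lemma \ref{lem-Gphipositive}, which the paper places \emph{after} the proposition; since your proof of it is self-contained this creates no circularity, but you are effectively reordering the exposition.
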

\begin{proof}
Because of the first result on the spectrum of $\CL_{\phi}$, it remains to prove that $r_{\phi}$ is simple and that any other eigenvalue has modulus strictly lower than $r_{\phi}$. For that we
 use spectral properties of positive operators exposed in \cite[chap. 1\& 2]{Kras64}.  
We claim that the set $K$ of non-negative Lipschitz functions is a \emph{solid and reproducing cone} . Solid means it has non-empty interior and reproducing means $$\Lipom=K-K.$$
It is easy to see that any positive Lipschitz function is in $\inte K$.

$\bullet$ {\bf Step one}. We prove that for any $f\not\equiv 0\in K$, there exists $p$ such that $\CL_{\phi}^{p}(f)$ belongs to $\inte K$. 

Let $y$ be such that $f(y)>0$. Let $\eps>0$ be such that $d_\Om(y,y')\leq \eps\Longrightarrow f(y')>0$. 
According to Remark \ref{rem-superdensiteunif}, there exists
$p\in\bb N$ such that for any $x\in \Om$, $B(y,\eps)\cap \s^{-p}(\{x\})\neq\emptyset$.
Then $\CL_{\phi}^{p}(f)$ is positive.
Indeed let $x\in\Om$, and $z$ in $B(y,\eps)\cap \s^{-p}(\{x\})$.
Let $\eta:=\min(\eps_A,\frac{\eps}{2})$.
The definition of $\eps_{A}$ yields that every $t$ in $\ds\prod_{i=0}^{p-1}B(z_i,\eta)$ belongs to $\Omega_p(x_0)$, therefore
$$\CL_{\phi}^{p}(f)(x)=\int_{\Om_{p}(x_0)} e^{S_p(\phi)(tx)}f(tx)\rho^{\otimes p}(dt)
\geq \int_{\prod_{i=0}^{p-1}B(z_i,\eta)} e^{S_p(\phi)(tx)}f(tx)\rho^{\otimes p}(dt).$$
But if $t\in \ds\prod_{i=0}^{p-1}B(z_i,\eta)$ then
$$d_\Om(tx,y)\leq d_\Om(tx,z)+d_\Om(z,y)\leq \eta+\frac{\eps}{2}\leq \eps$$
hence $f(tx)>0$.
As any non-empty ball in $E$ has positive $\rho$-measure we deduce that 
$\CL_{\phi}^{p}(f)(x)>0$.

$\bullet$ {\bf Step two}. End of the proof. 
We deduce from step one that $\CL_{\phi}$ is strongly positive (see \cite[Definitions 2.1.1]{Kras64}). Therefore it is $u$-positive for any $u\in \inte K$. From Th. 2.10, 2.11 and 2.13 we deduce that $r_{\phi}$ is a simple eigenvalue and that every other eigenvalue $\lambda$ of $\CL_{\phi}$ satisfies the inequality
$|\lambda|<r_\phi$.
\end{proof}

To re-employ notation from above, there is only one $\Pi_{0}$, no other $\Pi_{i}$'s. Furthermore, using the fact that $\nu_{\phi}$ is an eigenmeasure, one easily gets that  
for any $f\in \Lipom$,
\begin{equation}
\label{equ1-formCL}
\CL_{\phi}(f)=r_{\phi}\underbrace{\int f\,d\nu_{\phi}\cdot G_{\phi}}_{=\Pi_{0}(f)}+r_{\phi}\Psi(f).
\end{equation}

\subsection{Gibbs measure and ergodic properties}

\subsubsection{The Gibbs measure and its main properties}

Let $\mu_{\phi}$ be the measure defined by $d\mu_{\phi}:=G_{\phi}d\nu_{\phi}$. We emphasize that by construction $\mu_{\phi}$ is a probability measure. 
We shall use the following fact: for every $n\in\bb N$, 
\begin{equation}\label{fact}
\CL_{\phi}^n(f\cdot g\circ\s^n)=g\cdot \CL_{\phi}^n(f).
\end{equation}

\begin{lemma}
\label{lem-muphiinv}
The measure $\mu_{\phi}$ is $\s$-invariant. It is called the \emph{Dynamical Gibbs Measure (DGM in short)} associated to $\phi$. 
\end{lemma}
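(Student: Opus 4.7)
The plan is to verify invariance against an arbitrary continuous test function $g$, i.e.\ to show that $\int g\circ\sigma\,d\mu_\phi=\int g\,d\mu_\phi$. By density of $\CC^{+1}(\Om)$ in $\CC^{0}(\Om)$ we may assume $g\in\Lipom$. The two ingredients are: the eigenvalue relations $\CL_{\phi}G_{\phi}=r_{\phi}G_{\phi}$ and $\CL_{\phi}^{\star}\nu_{\phi}=r_{\phi}\nu_{\phi}$ (the latter coming from Proposition \ref{prop-rlambda}, since $\l_{\phi}=r_{\phi}$ and $\wh\CL_{\phi}^{\star}\nu_{\phi}=\nu_{\phi}$), together with the transfer identity \eqref{fact}.

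The key computation is to apply \eqref{fact} with $f=G_{\phi}$ and $n=1$:
$$\CL_{\phi}(G_{\phi}\cdot g\circ\sigma)=g\cdot \CL_{\phi}(G_{\phi})=r_{\phi}\,g\,G_{\phi}.$$
Now I integrate both sides against $\nu_{\phi}$. On one hand,
$$\int \CL_{\phi}(G_{\phi}\cdot g\circ\sigma)\,d\nu_{\phi}=\int G_{\phi}\cdot g\circ\sigma\,d(\CL_{\phi}^{\star}\nu_{\phi})=r_{\phi}\int g\circ\sigma\,G_{\phi}\,d\nu_{\phi}=r_{\phi}\int g\circ\sigma\,d\mu_{\phi}.$$
On the other hand,
$$\int r_{\phi}\,g\,G_{\phi}\,d\nu_{\phi}=r_{\phi}\int g\,d\mu_{\phi}.$$
Dividing both sides by $r_{\phi}>0$ yields $\int g\circ\sigma\,d\mu_{\phi}=\int g\,d\mu_{\phi}$, which is the $\sigma$-invariance of $\mu_{\phi}$. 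Since $\mu_{\phi}$ is a probability measure by construction (this was noted right before the lemma), the conclusion follows.

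There is no serious obstacle here: everything reduces to the conjunction of the left/right eigenvector relations with identity \eqref{fact}, which is itself an immediate consequence of the definition of $\CL_{\phi}$ since multiplying the integrand by $g(\sigma(t\omega))=g(\omega)$ pulls $g(\omega)$ out of the integral. The only subtlety worth mentioning is that one must justify integrating $\CL_{\phi}(G_{\phi}\cdot g\circ\sigma)$ against $\nu_{\phi}$ via the duality $\int \CL_{\phi}(h)\,d\nu_{\phi}=\int h\,d(\CL_{\phi}^{\star}\nu_{\phi})$, which is valid because $G_{\phi}\cdot g\circ\sigma\in\Lipom$ and $\CL_{\phi}$ acts continuously on $\CC^{0}(\Om)$ by Lemma \ref{opagibien}.
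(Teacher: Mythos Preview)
Your proof is correct and follows essentially the same approach as the paper: both combine the transfer identity \eqref{fact} (with $n=1$, $f=G_{\phi}$) with the two eigen-relations $\CL_{\phi}G_{\phi}=r_{\phi}G_{\phi}$ and $\CL_{\phi}^{\star}\nu_{\phi}=r_{\phi}\nu_{\phi}$. The only cosmetic difference is the order of the steps, and your restriction to $g\in\Lipom$ via density is harmless but unnecessary since the duality already holds for all $g\in\CC^{0}(\Om)$.
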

\begin{proof}
For $f$ continuous
\begin{eqnarray*}
\int f\circ\s\,d\mu_{\phi}&=& \int f\circ\s\cdot G_{\phi}\,d\nu_{\phi}\\
&=& \frac1{r_{\phi}}\int \CL_{\phi}(f\circ \s\cdot G_{\phi})\,d\nu_{\phi}\\
&=&  \frac1{r_{\phi}}\int f\cdot\CL_{\phi}( G_{\phi})\,d\nu_{\phi}=\int f\,d\mu_{\phi}.
\end{eqnarray*}
\end{proof}

\begin{proposition}
\label{prop-propergomuphi}
The measure $\mu_{\phi}$ is mixing thus ergodic. 
\end{proposition}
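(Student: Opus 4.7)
The plan is to prove mixing directly from the spectral decomposition \eqref{equ1-formCL}, and then deduce ergodicity since mixing implies ergodicity in general. By density of $\CC^{+1}(\Om)$ in $\CC^{0}(\Om)$ (and an easy $3\eps$ argument using $\|\mu_\phi\|=1$), it suffices to check that for any $f,g\in \Lipom$,
\[
\int f\cdot g\circ\s^{n}\,d\mu_{\phi}\;\xrightarrow[n\to+\infty]{}\;\int f\,d\mu_{\phi}\int g\,d\mu_{\phi}.
\]

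To reach this, I would first unfold the left-hand side using $d\mu_{\phi}=G_{\phi}\,d\nu_{\phi}$, then use that $\nu_\phi$ is the eigen-measure so that $\frac{1}{r_\phi^{n}}\int \CL_\phi^{n}(h)\,d\nu_\phi=\int h\,d\nu_\phi$ for every continuous $h$. Combining this with the identity \eqref{fact} $\CL_\phi^{n}(h\cdot g\circ\s^{n})=g\cdot \CL_\phi^{n}(h)$ applied to $h=f\cdot G_{\phi}$, one gets
\[
\int f\cdot g\circ\s^{n}\,d\mu_{\phi}=\int g\cdot \wt\CL_\phi^{n}(f\cdot G_{\phi})\,d\nu_{\phi},\qquad \wt\CL_\phi:=\tfrac{1}{r_\phi}\CL_\phi.
\]

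Next I would plug in the spectral decomposition. Proposition \ref{prop-spectre} together with \eqref{equ1-formCL} says that the only peripheral projector is $\Pi_{0}(h)=G_{\phi}\int h\,d\nu_{\phi}$, so
\[
\wt\CL_\phi^{n}=\Pi_{0}+\Psi^{n},\qquad \Pi_{0}\Psi=\Psi\Pi_{0}=0,
\]
with the spectral radius of $\Psi$ on $\Lipom$ strictly less than $1$. Applying this to $f\cdot G_{\phi}\in\Lipom$ gives
\[
\wt\CL_\phi^{n}(f\cdot G_{\phi})=G_{\phi}\int f\cdot G_{\phi}\,d\nu_{\phi}+\Psi^{n}(f\cdot G_{\phi})=G_{\phi}\int f\,d\mu_{\phi}+\Psi^{n}(f\cdot G_{\phi}).
\]
Integrating against $g\,d\nu_{\phi}$ we obtain the desired main term $\int g\,d\mu_{\phi}\cdot\int f\,d\mu_{\phi}$ plus an error bounded by $\|g\|_{\infty}\,\|\Psi^{n}(f\cdot G_{\phi})\|_{\infty}$.

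It remains to argue that this error tends to $0$. Since $\Psi$ has spectral radius $<1$ on $(\Lipom,\|\cdot\|_{L})$, we have $\|\Psi^{n}(f\cdot G_{\phi})\|_{L}\to 0$ geometrically, and because $\|\cdot\|_{\infty}\le\|\cdot\|_{L}$ this also forces $\|\Psi^{n}(f\cdot G_{\phi})\|_{\infty}\to 0$. This concludes the mixing statement, and ergodicity follows. The only mildly delicate point is the one already resolved above, namely making sure that the peripheral spectrum reduces to $\{r_{\phi}\}$ with simple eigenvalue, so that the decomposition has no oscillatory terms $e^{i\theta_{j}}\Pi_{j}$ with $\theta_{j}\ne 0$; this is exactly what Proposition \ref{prop-spectre} provides and is the place where the mixing assumption (A3) (through Lemma \ref{lem-preimdense} and the Krasnoselskii positive-operator argument) is essential.
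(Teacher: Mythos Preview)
Your proof is correct and follows essentially the same route as the paper's: unfold $\int f\cdot g\circ\sigma^{n}\,d\mu_\phi$ using $d\mu_\phi=G_\phi\,d\nu_\phi$ and the eigen-measure property, apply the identity \eqref{fact} with $h=fG_\phi$, insert the spectral decomposition $\wt\CL_\phi^{n}=\Pi_0+\Psi^{n}$, and let the remainder vanish because $\Psi$ has spectral radius $<1$ on $\Lipom$. The only additions you make over the paper's version are the explicit density remark and the comment on where (A3) enters via Proposition~\ref{prop-spectre}, both of which are fine.
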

\begin{proof}
Let $f$ and $g$ be two functions in $\Lipom$. Then
\begin{eqnarray*}
\int f\cdot g\circ\s^{n}\,d\mu_{\phi}&=& \int f\cdot g\circ \s^{n}\cdot G_{\phi}\,d\nu_{\phi}\\
&=& \frac1{r_{\phi}^{n}}\int\CL_{\phi}^{n}(fG_{\phi}.g\circ \s^{n})\,d\nu_{\phi}\\
&=& \frac1{r_{\phi}^{n}}\int\CL_{\phi}^{n}(fG_{\phi}).g\,d\nu_{\phi}\\
&=&\int\left(\int fG_{\phi}\,d\nu_{\phi}\cdot G_{\phi}+\Psi^{n}(fG_{\phi})\right)\cdot g\,d\nu_{\phi}.
\end{eqnarray*}
We have seen that the spectral radius of $\Psi$ is strictly  lower than 1. Therefore $\disp \Psi^{n}(fG_{\phi})$ goes to 0 for the Lipschitz norm, thus for the continuous norm. This yields 
$$\int f\cdot g\circ\s^{n}\,d\mu_{\phi}\to_{\ninf}\int f\,d\mu_{\phi}\int g\,d\mu_{\phi},$$
and the proposition is proved. 
\end{proof}

\subsubsection{Furthermore properties}

\begin{lemma}
\label{lem-Gphipositive}
There exists $C(\phi)$ such that for every $x$, $e^{-C(\phi)}\le G_{\phi}(x)\le e^{C(\phi)}$.
\end{lemma}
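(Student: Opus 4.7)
The plan is to derive the lemma directly from the uniform comparison estimate on $\CL_\phi^n(\BBone)$ already obtained in Proposition~\ref{prop-rlambda}. Recall that for all $n > N_1+N_2$ and all $x,y \in \Om$,
$$e^{-C} \le \frac{\CL_\phi^n(\BBone)(x)}{\CL_\phi^n(\BBone)(y)} \le e^C.$$

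First I would upgrade this relative bound to an absolute one on the normalized iterates $\wt\CL_\phi^n(\BBone) = r_\phi^{-n}\CL_\phi^n(\BBone)$. Integrating the above inequality against $\nu_\phi$ in the variable $y$, and using $\int \CL_\phi^n(\BBone)\,d\nu_\phi = \lambda_\phi^n = r_\phi^n$ (which is exactly the content of Proposition~\ref{prop-rlambda}), yields
$$e^{-2C} \le \wt\CL_\phi^n(\BBone)(x) \le e^{2C}$$
for every $n > N_1+N_2$ and every $x \in \Om$.

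Next, I would plug this into the Cesàro definition
$$G_\phi(x) = \lim_{\ninf} \frac{1}{n}\sum_{k=0}^{n-1} \wt\CL_\phi^k(\BBone)(x),$$
splitting the sum at $k_0 := N_1+N_2$. The first $k_0$ summands are continuous on the compact space $\Om$ and hence uniformly bounded by some constant depending only on $\phi$ (say $M(\phi) = \max_{0\le k \le k_0}\|\wt\CL_\phi^k(\BBone)\|_\infty$), so they contribute at most $k_0 M(\phi)/n$ to the average, which vanishes as $n \to +\infty$. The remaining $n-k_0$ terms all lie in $[e^{-2C},e^{2C}]$, so the average of the tail stays in the same interval. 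Passing to the limit gives $e^{-2C} \le G_\phi(x) \le e^{2C}$, and setting $C(\phi) := 2C$ concludes.

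I do not expect any real obstacle here: the whole point of the uniform comparison established in Proposition~\ref{prop-rlambda} was to encode precisely this kind of positivity-and-boundedness statement, and the Cesàro averaging only preserves such two-sided bounds. The one minor technical check is that the first $k_0$ terms are bounded, but this is immediate since each $\wt\CL_\phi^k(\BBone)$ is continuous on a compact space.
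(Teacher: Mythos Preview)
Your argument is correct and in fact slightly sharper than needed: after rewriting the comparison as $e^{-C}\CL_\phi^n(\BBone)(y)\le \CL_\phi^n(\BBone)(x)\le e^{C}\CL_\phi^n(\BBone)(y)$ and integrating in $y$ against $\nu_\phi$, you get $e^{-C}\le \wt\CL_\phi^n(\BBone)(x)\le e^{C}$ directly, so $C(\phi)=C$ already works (the factor $2$ is harmless but unnecessary). The Ces\`aro step is then exactly as you describe.

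This is a genuinely different route from the paper. The paper does not exploit the quantitative bound from Proposition~\ref{prop-rlambda} at all; instead it argues qualitatively: if $G_\phi(x)=0$ for some $x$, the eigen\-equation $\CL_\phi(G_\phi)=r_\phi G_\phi$ forces $G_\phi$ to vanish on $\sigma^{-1}(\{x\})$, hence by induction on all of $\bigcup_n\sigma^{-n}(\{x\})$, which is dense by Lemma~\ref{lem-preimdense}; continuity then gives $G_\phi\equiv 0$, contradicting $\int G_\phi\,d\nu_\phi=1$. Positivity plus compactness then yield the two-sided bound. Your approach is shorter and more constructive, producing an explicit constant tied to the one already obtained in Proposition~\ref{prop-rlambda}; the paper's approach, by contrast, is self-contained in the sense that it only uses the eigenfunction property and the topological mixing, without appealing to the Ces\`aro formula for $G_\phi$.
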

\begin{proof}
By definition, $G_{\phi}\ge 0$. Let us prove by contradiction it is positive. Assume that $G_{\phi}(x)=0$. 
Then, $\CL_{\phi}(G_{\phi})=r_{\phi}G_{\phi}$ shows that $G_{\phi}(tx)=0$ for $\rho$-\ae $t$ in $E$ such that $A(t,x_{0})>0$. As $A$ and $G_\phi$ are continuous and $\rho$ has full support, this yields that for every $t$ such that $A(t,x_{0})>0$ $G_{\phi}(tx)=0$. In other words, for every $y$ in $\s^{-1}(\{x\})$, $G_{\phi}(y)=0$. 
By induction we deduce that for every $n\in\bb N$, 
for every $z$ in $\s^{-n}(\{x\})$, $G_{\phi}(z)=0$. Now, the set $\disp\cup_{n\ge 0}\s^{-n}(\{x\})$ is dense, and $G_{\phi}$ is continuous everywhere and null on a dense set. It is thus null everywhere which is impossible because $\int G_{\phi}\,d\nu_{\phi}=1$. 
 This shows that $G_{\phi}$ is positive, thus bounded from below by some constant of the form $e^{-C(\phi)}$. 
Furthermore, $\Om$ is compact and then $G_{\phi}$ is bounded from above. 
\end{proof}

Lemma \ref{lem-Gphipositive} immediately yields
\begin{corollary}
\label{cormunuequiv}
Both measures $\mu_{\phi}$ and $\nu_{\phi}$ are equivalent.
\end{corollary}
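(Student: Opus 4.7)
The plan is to deduce the equivalence of $\mu_\phi$ and $\nu_\phi$ directly from Lemma \ref{lem-Gphipositive}, which gives two-sided bounds $e^{-C(\phi)}\le G_\phi \le e^{C(\phi)}$ on the eigenfunction. Since $\mu_\phi$ is defined by the Radon--Nikodym relation $d\mu_\phi = G_\phi \, d\nu_\phi$, the boundedness of $G_\phi$ above makes $\mu_\phi \ll \nu_\phi$ automatic, and the strict positivity of the lower bound is what yields the reverse absolute continuity.

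Concretely, I would proceed in two short steps. First, for any Borel set $B\subset \Omega$ with $\nu_\phi(B)=0$, one has
\[
\mu_\phi(B) = \int_B G_\phi \, d\nu_\phi \le e^{C(\phi)}\nu_\phi(B)=0,
\]
so $\mu_\phi \ll \nu_\phi$. Second, for any Borel set $B$ with $\mu_\phi(B)=0$, the lower bound gives
\[
0=\mu_\phi(B)=\int_B G_\phi\, d\nu_\phi \ge e^{-C(\phi)}\nu_\phi(B),
\]
so $\nu_\phi(B)=0$, i.e.\ $\nu_\phi \ll \mu_\phi$. Combining both inclusions yields $\mu_\phi \sim \nu_\phi$.

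There is essentially no obstacle: the work has already been done in Lemma \ref{lem-Gphipositive}, whose proof used the density of backward orbits (Lemma \ref{lem-preimdense}) together with $\CL_\phi G_\phi = r_\phi G_\phi$ to exclude a vanishing point of $G_\phi$, and the compactness of $\Omega$ plus continuity of $G_\phi$ to get the upper bound. Once those two-sided bounds are in hand, the corollary is an immediate application of the definition of absolute continuity, and no further dynamical or spectral input is required.
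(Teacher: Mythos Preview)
Your proposal is correct and is exactly the approach the paper takes: the paper simply states that the corollary follows immediately from Lemma~\ref{lem-Gphipositive}, and you have spelled out the one-line argument that the two-sided bounds $e^{-C(\phi)}\le G_\phi\le e^{C(\phi)}$ together with $d\mu_\phi=G_\phi\,d\nu_\phi$ give mutual absolute continuity.
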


\subsubsection{Regularity of the spectral radius}

\begin{proposition}
\label{prop-convex}
The map ${ P}:\phi\mapsto \log r_{\phi}$ is convex on $\Lipom$.
\end{proposition}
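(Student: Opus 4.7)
The strategy is to leverage the concrete integral representation of iterates of the transfer operator together with Proposition \ref{prop-rlambda}, which gives the pointwise formula
\[
\log r_{\phi}=\lim_{\ninf}\frac1n\log\CL^{n}_{\phi}(\BBone)(x)
\]
valid for any $x\in\Om$. Convexity will follow once we establish the multiplicative inequality
\[
\CL_{t\phi_{1}+(1-t)\phi_{0}}^{n}(\BBone)(x)\le\bigl(\CL_{\phi_{1}}^{n}(\BBone)(x)\bigr)^{t}\bigl(\CL_{\phi_{0}}^{n}(\BBone)(x)\bigr)^{1-t}
\]
for all $\phi_{0},\phi_{1}\in\Lipom$, $t\in[0,1]$, $n\ge 1$ and $x\in\Om$.

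First I would recall from the proof of Lemma \ref{lem-spectralradiusope} the explicit identity
\[
\CL_{\phi}^{n}(\BBone)(x)=\int_{\Om_{n}(x_{0})}e^{S_{n}(\phi)(tx)}\,\rho^{\otimes n}(dt).
\]
Since $S_{n}$ is linear in $\phi$, for $\phi_{t}:=t\phi_{1}+(1-t)\phi_{0}$ the integrand factors as
\[
e^{S_{n}(\phi_{t})(tx)}=\bigl(e^{S_{n}(\phi_{1})(tx)}\bigr)^{t}\bigl(e^{S_{n}(\phi_{0})(tx)}\bigr)^{1-t}.
\]
Then I would apply H\"older's inequality on the measure space $\bigl(\Om_{n}(x_{0}),\rho^{\otimes n}\bigr)$ with conjugate exponents $p=1/t$ and $q=1/(1-t)$ (the cases $t=0$ and $t=1$ being trivial) to obtain
\[
\int_{\Om_{n}(x_{0})}e^{S_{n}(\phi_{1})(tx)t}e^{S_{n}(\phi_{0})(tx)(1-t)}\,\rho^{\otimes n}(dt)\le\bigl(\CL_{\phi_{1}}^{n}(\BBone)(x)\bigr)^{t}\bigl(\CL_{\phi_{0}}^{n}(\BBone)(x)\bigr)^{1-t},
\]
which is precisely the desired multiplicative bound.

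To finish, I take $\frac{1}{n}\log$ on both sides to get
\[
\frac1n\log\CL_{\phi_{t}}^{n}(\BBone)(x)\le t\cdot\frac1n\log\CL_{\phi_{1}}^{n}(\BBone)(x)+(1-t)\cdot\frac1n\log\CL_{\phi_{0}}^{n}(\BBone)(x),
\]
and letting $n\to +\infty$, the limits exist by Proposition \ref{prop-rlambda} and yield
\[
\log r_{t\phi_{1}+(1-t)\phi_{0}}\le t\log r_{\phi_{1}}+(1-t)\log r_{\phi_{0}},
\]
which is exactly convexity of $P$ on $\Lipom$. I do not anticipate any serious obstacle: the argument is a direct H\"older estimate on the integral formula, and the passage to the limit is justified by the already-established pointwise formula for $\log r_{\phi}$. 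The only mild care needed is to handle the boundary values $t\in\{0,1\}$ separately (where the statement is trivial).
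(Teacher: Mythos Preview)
Your argument is correct and essentially identical to the paper's own proof: both use the explicit integral formula for $\CL_{\phi}^{n}(\BBone)(x)$, apply H\"older's inequality to the factored integrand, take $\frac1n\log$, and pass to the limit via \eqref{limclnr} from Proposition~\ref{prop-rlambda}. The only cosmetic issue is that you use the letter $t$ both for the convex-combination parameter and for the integration variable in $\Om_{n}(x_{0})$, which should be disambiguated.
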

\begin{proof}
Let us pick $\phi_1$, $\phi_2$ in $\Lipom$, and $\al\in[0,1]$. Set $\phi:=\al \phi_1+(1-\al)\phi_2$. 
For $n\in\bb N$, $x\in\Om$,
$$\begin{aligned}
\CL_{\phi}^n(\ind)(x)&=\int_{E^n} e^{S_n(\phi)(tx)}\ind_{\Om_{n}(x_0)}(t)\rho^{\otimes n}(dt)\\
&=\int_{E^n} e^{\al S_n(\phi_1)(tx)}\ind^\alpha_{\Om_{n}(x_0)}(t)
e^{(1-\al) S_n(\phi_2)(tx)}\ind^{1-\al}_{\Om_{n}(x_0)}(t)\rho^{\otimes n}(dt)\\
&\leq \pare{\int_{E^n} e^{S_n(\phi_1)(tx)}\ind_{\Om_{n}(x_0)}(t)\rho^{\otimes n}(dt)}^\al\pare{\int_{E^n} e^{S_n(\phi_2)(tx)}\ind_{\Om_{n}(x_0)}(t)\rho^{\otimes n}(dt)}^{1-\al}
\end{aligned}$$
therefore
$$\frac{1}{n}\log\left(\CL_{\phi}^n(\1)(x)\right)\le 
\al\frac{1}{n}\log\left(\CL_{\phi_1}^{n}(\1)(x)\right)+(1-\al)\frac{1}{n}\log\left(\CL^{n}_{\phi_2}(\1)(x)\right).$$
We deduce from \eqref{limclnr} that
$$\log r_{\al \phi_1+(1-\al)\phi_2}\leq \al \log r_{\phi_1}+(1-\al)\log r_{\phi_2},$$
which proves the convexity of ${ P}$.
\end{proof}

Let $\ogpsi$ be as above.
We recall the definition
$$I(\ogpsi):=\left\{\int \ogpsi\,d\mu,\ \mu\in M_\sigma(\Omega)\right\}.$$
By definition $I(\ogpsi)$ is a convex and closed set. 

\begin{proposition}
\label{prop-diff-convex}
The map ${\CP}:\gt\mapsto \log r_{\gt\cdot\ogpsi}$ is convex and 
infinitely differentiable on $\bb R^q$
with
\begin{equation}\label{nablaf}
\nabla \CP(\gt)=\int\ogpsi\,d\mu_{\gt\cdot\ogpsi}.
\end{equation}
For any $\gz=\nabla \CP(\gt)$ in $\nabla \CP(\bb R^q)$, 
$\CH(z,\ogpsi)$ is finite with
{\begin{equation}\label{Hnabla}\CH(\nabla \CP(\gt),\ogpsi)=
\CP(\gt)-\gt\cdot\nabla \CP(\gt)=\log r_{\gt\cdot\ogpsi}-\int \gt\cdot\ogpsi\,d\mu_{\gt\cdot\ogpsi}.
\end{equation}}
If $\gz$ does not belong to the closure $\ov{\nabla \CP(\bb R^q)}$ of 
$\nabla \CP(\bb R^q)$, in particular when $z\notin I(\ogpsi)$, then $\CH(\gz,\ogpsi)=-\infty$.
\end{proposition}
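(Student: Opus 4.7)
The convexity assertion is exactly Proposition \ref{prop-convex} restricted to the affine family $\gt\mapsto\gt\cdot\ogpsi$, and the $C^{\infty}$ regularity has already been asserted in Theorem \ref{th-rayonana}. So the real work is to establish the gradient formula \eqref{nablaf}, the Fenchel identity \eqref{Hnabla}, and the two statements about $\CH=-\infty$. My plan is to obtain $\nabla\CP$ by analytic perturbation of the transfer operator, then deduce \eqref{Hnabla} from convex duality at a point where the subdifferential is single-valued, and finally invoke a standard Rockafellar-type fact to treat $\gz\notin\ov{\nabla\CP(\R^q)}$.

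For the gradient, fix $\gt\in\R^{q}$ and $i\in\{1,\dots,q\}$, and set $\phi_{s}=\gt\cdot\ogpsi+s\psi_{i}$. Since $\CL_{\phi_{s}}(f)(\omega)=\int e^{s\psi_{i}(t\omega)}e^{\gt\cdot\ogpsi(t\omega)}A(t,\omega_{0})f(t\omega)\,d\rho(t)$, the family $s\mapsto\CL_{\phi_{s}}$ is analytic in the Banach-space sense on $\CC^{+1}(\Om)$. By Proposition \ref{prop-spectre} the eigenvalue $r_{\gt\cdot\ogpsi}$ is simple and isolated, so by Kato's analytic perturbation theory the eigendata $(r_{\phi_{s}},G_{\phi_{s}},\nu_{\phi_{s}})$ depend analytically on $s$ in a neighbourhood of $0$. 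Differentiating $\CL_{\phi_{s}}G_{\phi_{s}}=r_{\phi_{s}}G_{\phi_{s}}$ at $s=0$ yields
\begin{equation*}
\CL_{\gt\cdot\ogpsi}(\psi_{i}G_{\gt\cdot\ogpsi})+\CL_{\gt\cdot\ogpsi}(G_{\gt\cdot\ogpsi}')=r'G_{\gt\cdot\ogpsi}+r_{\gt\cdot\ogpsi}G_{\gt\cdot\ogpsi}',
\end{equation*}
and integrating against $\nu_{\gt\cdot\ogpsi}$ (using $\CL_{\gt\cdot\ogpsi}^{*}\nu_{\gt\cdot\ogpsi}=r_{\gt\cdot\ogpsi}\nu_{\gt\cdot\ogpsi}$ and $\int G_{\gt\cdot\ogpsi}\,d\nu_{\gt\cdot\ogpsi}=1$) makes the $G'$ terms cancel and gives $r'=r_{\gt\cdot\ogpsi}\int\psi_{i}\,d\mu_{\gt\cdot\ogpsi}$. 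This proves \eqref{nablaf}. The same formula can alternatively be read off from \eqref{unique} using \cite{GKLM} combined with the variational characterisation of equilibrium states.

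For the Fenchel identity \eqref{Hnabla}, fix $\gz=\nabla\CP(\gt_{0})$. Convexity of $\CP$ gives
\begin{equation*}
\CP(\gt)\ge \CP(\gt_{0})+\nabla\CP(\gt_{0})\cdot(\gt-\gt_{0})=\CP(\gt_{0})+\gz\cdot(\gt-\gt_{0})
\end{equation*}
for every $\gt\in\R^{q}$, which rewrites as $\CP(\gt)-\gt\cdot\gz\ge\CP(\gt_{0})-\gt_{0}\cdot\gz$ with equality at $\gt=\gt_{0}$; taking the infimum gives $\CH(\gz,\ogpsi)=\CP(\gt_{0})-\gt_{0}\cdot\gz$, and substituting \eqref{nablaf} produces the right-hand side of \eqref{Hnabla}.

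Finally, for $\gz\notin\ov{\nabla\CP(\R^{q})}$, I will use the duality fact (Rockafellar, \emph{Convex Analysis}, Thm.~26.5) that for a finite convex differentiable function on $\R^{q}$, the effective domain of the Legendre conjugate $\CP^{*}(\gz):=\sup_{\gt}(\gt\cdot\gz-\CP(\gt))=-\CH(\gz,\ogpsi)$ has closure equal to $\ov{\nabla\CP(\R^{q})}$; hence $\gz$ not lying in this closure forces $\CP^{*}(\gz)=+\infty$, that is $\CH(\gz,\ogpsi)=-\infty$. The ``in particular'' clause follows because \eqref{nablaf} gives $\nabla\CP(\R^{q})\subseteq I(\ogpsi)$, and $I(\ogpsi)$ is closed, so $\ov{\nabla\CP(\R^{q})}\subseteq I(\ogpsi)$; any $\gz\notin I(\ogpsi)$ therefore lies outside $\ov{\nabla\CP(\R^{q})}$. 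The main obstacle is the perturbation step: one must check that the spectral gap from Proposition \ref{prop-spectre} is stable under the analytic perturbation $\phi\mapsto\phi+s\psi_{i}$ in $\CC^{+1}(\Om)$, so that Kato's theory (or, equivalently, differentiation of the spectral projector $\Pi_{0}$ appearing in \eqref{equ1-formCL}) applies uniformly; this is where the Lipschitz estimates used for the ITM theorem come in again.
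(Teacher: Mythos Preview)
Your proof is correct and follows essentially the same route as the paper: convexity from Proposition~\ref{prop-convex}, the gradient formula via analytic perturbation of the simple isolated eigenvalue (the paper cites Hennion--Herv\'e rather than Kato, but this is the same mechanism, and your explicit computation of $r'$ by pairing the differentiated eigen-equation with $\nu_{\gt\cdot\ogpsi}$ is exactly what underlies their reference), and the statements about $\CH$ via Rockafellar's convex duality for $\CP^{*}$. One small framing caveat: this proposition is part of the proof of Theorem~\ref{th-rayonana}, so appealing to that theorem (or to \eqref{unique}) for the $C^{\infty}$ regularity is circular---but since you go on to supply the perturbation argument independently, the substance of your proof is unaffected.
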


\begin{proof}
The convexity of $\CP$ follow from Proposition \ref{prop-convex}.
The map $Q$ with values in $\cal L(\Lipom)$
defined on $\bb R^q$ by 
$$Q(\gt)=\CL_{\gt\cdot\ogpsi}$$
is infinitely differentiable with 
$$\frac{\partial Q}{\partial t_k}(\gt)(g)=Q(\gt)(\psi_k g).$$
Adapting the proof of Thm. III.8 and Corollary III.11. of
\cite{hennion-herve} we see that the map $\gt\mapsto r_{\gt\cdot\ogpsi}$ is infinitely differentiable with
$$\frac{\partial r_{\gt\cdot\ogpsi} }{\partial t_k}(\gt)=r_{\gt\cdot\ogpsi}\int\psi_k\,d\mu_{\gt\cdot\ogpsi},$$
from which we deduce \eqref{nablaf}.

The conjugate function $\CP^*$ of $\CP$, defined by
$$\CP^*(\gz)=\sup_{\gt\in\bb R^q} (\gt\cdot\gz-\CP(\gt)),$$
is convex on $\bb R^q$ with values in $]-\infty,+\infty]$.
We refer for instance to \cite{Rockafellar}, section 26, for the theory of conjugates of convex functions.
In particular it is known that 
$$\nabla \CP(\bb R^q)\subset \Dom \CP^*\subset \ov{\nabla \CP(\bb R^q)},$$
where $\Dom \CP^*=\{\gz\in\bb R^q,\,\CP^*(\gz)<+\infty\}$,
with 
$$\CP^*(\nabla \CP(\gt))=\gt\cdot \nabla \CP(\gt)-\CP(\gt).$$
As $\CH(\cdot,\ogpsi)=-\CP^*$ the proof is finished.
\end{proof}

\section{Proof of Theorem \ref{thmpressquadra}} \label{sec-proofthcwp}

\subsection{Auxiliary functions \texorpdfstring{$\varphi$}{} and \texorpdfstring{$\fibar$}{}}
We recall the definitions of $\varphi_{\be}$ and $\fibar$ defined on $\bb R^{q}$: 
\begin{equation}\label{eqphi}
\varphi_{\beta}(\gt)=-\frac{\beta}{2}\|\gt\|^2+\log r_{\be\gramat t\cdot\ogpsi},\quad \fibar(\gz):=\disp\CH(\gz,\ogpsi)+\frac\be2\|\gz\|^{2}.
\end{equation}
\subsubsection{The function $\varphi_{\be}$}
\bigskip
\begin{lemma}
\label{lem-majovarphi}
For every $\beta >0$ and every $\gt$ satisfying $||\gt||>4\|\ogpsi\|_\infty$, 
$$\disp\varphi_{\be}(\gt)<\CH_{top}-\frac{\beta}{4}||\gt||^{2}.$$
\end{lemma}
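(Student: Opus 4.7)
The inequality to prove is equivalent, after adding $\tfrac{\beta}{4}\|\gt\|^2$ to both sides and rearranging, to the bound
$$\log r_{\beta\gt\cdot\ogpsi} - \log r_0 < \frac{\beta}{4}\|\gt\|^2.$$
So the plan is to dominate the left-hand side by a quantity linear in $\|\gt\|$, and then use the hypothesis $\|\gt\|>4\|\ogpsi\|_\infty$ to win by a factor of $\|\gt\|$ against the quadratic right-hand side.

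The linear bound comes straight from the definition of the transfer operator. For any $\phi\in\Lipom$,
$$\CL_{\phi}(f)(\om)=\int_E e^{\phi(t\om)}A(t,\om_{0})f(t\om)\,d\rho(t)\le e^{\|\phi\|_\infty}\CL_{0}(f)(\om),$$
so by iteration $\CL_{\phi}^{n}(\BBone)(x)\le e^{n\|\phi\|_\infty}\CL_{0}^{n}(\BBone)(x)$. Taking $\frac{1}{n}\log$ and letting $n\to+\infty$, Lemma \ref{lem-spectralradiusope} (or equivalently Proposition \ref{prop-rlambda}) gives
$$\log r_{\phi}\le \log r_{0}+\|\phi\|_\infty=\CH_{top}+\|\phi\|_\infty.$$

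Apply this with $\phi=\beta\gt\cdot\ogpsi$. By Cauchy--Schwarz, $|\gt\cdot\ogpsi(\om)|\le \|\gt\|\cdot\|\ogpsi(\om)\|$, hence $\|\beta\gt\cdot\ogpsi\|_\infty\le \beta\|\gt\|\cdot\|\ogpsi\|_\infty$, so
$$\log r_{\beta\gt\cdot\ogpsi}-\CH_{top}\le \beta\|\gt\|\cdot\|\ogpsi\|_\infty.$$
Under the hypothesis $\|\gt\|>4\|\ogpsi\|_\infty$, the strict inequality $\|\gt\|^2>4\|\gt\|\cdot\|\ogpsi\|_\infty$ holds, and since $\beta>0$ we get $\beta\|\gt\|\cdot\|\ogpsi\|_\infty<\tfrac{\beta}{4}\|\gt\|^2$. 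Combining these two estimates yields
$$\varphi_{\be}(\gt)=-\frac{\beta}{2}\|\gt\|^{2}+\log r_{\beta\gt\cdot\ogpsi}<-\frac{\beta}{2}\|\gt\|^{2}+\CH_{top}+\frac{\beta}{4}\|\gt\|^{2}=\CH_{top}-\frac{\beta}{4}\|\gt\|^{2},$$
which is the claim.

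There is essentially no obstacle: the only mildly non-trivial ingredient is the sub-multiplicative comparison $\CL_{\phi}^{n}\le e^{n\|\phi\|_\infty}\CL_{0}^{n}$, which is immediate from positivity of the integrand. The quantitative gain comes for free from the gap between the linear bound $\beta\|\gt\|\cdot\|\ogpsi\|_\infty$ and the quadratic term $\tfrac{\beta}{2}\|\gt\|^2$ in $\varphi_{\be}$, and the constant $4$ in the hypothesis is exactly what is needed to absorb one half of this quadratic gap.
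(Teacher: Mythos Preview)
Your proof is correct. Both your argument and the paper's reach the same key linear bound
\[
\log r_{\beta\gt\cdot\ogpsi}\le \CH_{top}+\beta\|\gt\|\,\|\ogpsi\|_\infty,
\]
after which the conclusion is identical. The difference lies in how this bound is obtained. The paper sets $g(x)=\log r_{x\beta\gt\cdot\ogpsi}$ and applies the mean value theorem, using the differentiability result of Proposition~\ref{prop-diff-convex} to identify $g'(\theta)=\beta\gt\cdot\int\ogpsi\,d\mu_{\theta\beta\gt\cdot\ogpsi}$, and then bounds this by $\beta\|\gt\|\,\|\ogpsi\|_\infty$. You instead go directly through the pointwise comparison $\CL_{\phi}^{n}(\BBone)\le e^{n\|\phi\|_\infty}\CL_{0}^{n}(\BBone)$, which follows immediately from the explicit formula for $\CL_{\phi}^{n}(\BBone)$ and positivity of the integrand, and then invoke Lemma~\ref{lem-spectralradiusope}. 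Your route is more elementary: it avoids the spectral-gap and perturbation machinery underlying the differentiability of $\CP$, and would work in any setting where the spectral radius is given by $\lim_n\tfrac{1}{n}\log\|\CL_\phi^n(\BBone)\|_\infty$. The paper's route, on the other hand, fits naturally into its later use of $\nabla\CP$ and illustrates that the linear bound is really a statement about the derivative of the pressure.
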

\begin{proof}
Let us set $g(x):=\log r_{x\beta\gt\cdot\ogpsi}$ with $x\in[0,1]$. It is differentiable and Prop. \ref{prop-diff-convex} yields that for every $x$, 
$$g'(x)=\beta\gt\cdot\int \ogpsi\,d\mu_{x\beta\gt\cdot\ogpsi}.$$

Then, we use the mean value theorem. 
There exists $\theta\in]0,1[$ such that 
$$\log r_{\beta\gt\cdot\ogpsi}=g(1)=\CH_{top}+g'(\theta)=\CH_{top}+\beta\gt\cdot\int \ogpsi\,d\mu_{\theta\beta\gt\cdot\ogpsi}.$$
This  yields 
$$\varphi_{\be}(\gt)=\log r_{\beta\gt\cdot\ogpsi}-\frac\be2||\gt||^{2}\le \CH_{top}
+\beta\|\gt\|\|\ogpsi\|_\infty-\frac\be2||\gt||^{2}.$$
Now 
$$\beta\|\gt\|\|\ogpsi\|_\infty-\frac\be2||\gt||^{2}-(-\frac{\beta}{4}||\gt||^{2})
=-\frac{\beta\|t\|}{4}\pare{\|t\|-4\|\ogpsi\|_\infty}$$
and we get the result.
\end{proof}
We emphasize an immediate consequence of Lemma \ref{lem-majovarphi}:
all the maxima for $\varphi_{\be}$ are reached at critical points and inside the hypercube $[-K,K]^{q}$ if $K$ is chosen greater than $4\|\ogpsi\|_\infty$. 
Indeed $\varphi_{\be}(\textbf{0})=\CH_{top}$ and if $\gt$ is outside
the hypercube $[-K,K]^{q}$ with $K\geq 4\|\ogpsi\|_\infty$
then $||\gt||>4\|\ogpsi\|_\infty$, which implies
$\varphi_{\be}(\gt)<\CH_{top}$.

\subsubsection{The function $\fibar$}

We also recall the definition 
\begin{equation}
\label{equ1-logrhz}
\CH(\gz,\ogpsi):=\inf_{\gt\in\R^{q}}\left\{\log r_{\gt\cdot\ogpsi}-\gt.\gz\right\}=-\CP^\star(z),
\textrm{ where }\CP(\gt)=\log r_{\gt\cdot\ogpsi}.
\end{equation}
From the theory of conjugate functions we know that $\CH(\cdot,\ogpsi)$ is concave and upper semi-continous, with values
in $[-\infty,+\infty[$.

We emphasize that Proposition \ref{prop-diff-convex} yields that $\fibar=-\infty$ outside $\CI(\ogpsi)$, and $\fibar$ is finite on $\nabla \CP(\bb R^q)$.
Consequently all the maxima for $\fibar$ are reached inside the hypercube $[-\|\ogpsi\|_\infty,\|\ogpsi\|_\infty]^{q}$, which contains $\CI(\ogpsi)$.

Moreover, in our setting the equality $\CP^{**}=\CP$ holds true, hence we know that
\begin{equation}
\label{equ2-logrhz}
\log r_{\gt\cdot\ogpsi}=\CP(\gt)=\sup_{\gz}\{\CH(\gz,\ogpsi)+\gt\cdot\gz\}.
\end{equation}

Let us set 
$$\wt\CH(\gz):=\left\{\begin{aligned}&\sup_{\mu}\left\{\wh\CH(\mu),\,\int\ogpsi\,d\mu=\gz\right\}\textrm{ if }\gz\in I(\ogpsi),\\
&-\infty \textrm{ if }\gz\notin I(\ogpsi).\end{aligned}\right.$$ 
Then, 

\begin{proposition}
\label{prop-maxentropyequim}
For every $\gz$ in $\bb R^q$, 
$\wt\CH(\gz)=\CH(\gz,\ogpsi)$.
\end{proposition}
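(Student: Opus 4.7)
The proof uses convex duality. I will show that both $\wt\CH$ and $\CH(\cdot,\ogpsi)$ are concave upper semi-continuous proper functions on $\R^q$ with the same Legendre-Fenchel conjugate $\CP$; Fenchel-Moreau then forces them to agree. First, the trivial case: if $\gz \notin I(\ogpsi)$ then $\wt\CH(\gz) = -\infty$ by definition, and $\CH(\gz,\ogpsi) = -\infty$ by Theorem \ref{th-rayonana}(1). The easy direction $\wt\CH(\gz) \leq \CH(\gz,\ogpsi)$ for $\gz \in I(\ogpsi)$ is also immediate: for any $\sigma$-invariant $\mu$ with $\int \ogpsi\,d\mu = \gz$, plugging the specific $\ogpsi$ into the defining infimum of $\wh\CH$ yields $\wh\CH(\mu) \leq \CH(\gz,\ogpsi)$; taking the supremum over such $\mu$ gives the inequality.

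For the reverse inequality, I compute the Legendre-Fenchel conjugate of $-\wt\CH$ by regrouping the variational formula \eqref{unique} according to the value of $\int \ogpsi\,d\mu$:
\begin{equation*}
\sup_{\gz \in \R^q} \pare{\gt \cdot \gz + \wt\CH(\gz)} \;=\; \sup_{\mu \in M_\sigma(\Omega)} \pare{\wh\CH(\mu) + \gt \cdot \int \ogpsi\,d\mu} \;=\; \CP(\gt).
\end{equation*}
The first equality partitions $M_\sigma(\Omega)$ by the level sets $\{\mu : \int \ogpsi\,d\mu = \gz\}$, noting that $\wt\CH(\gz) = -\infty$ outside $I(\ogpsi)$ contributes nothing; the second is exactly \eqref{unique}. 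By equation \eqref{equ2-logrhz} the Legendre-Fenchel conjugate of $-\CH(\cdot,\ogpsi)$ is also $\CP$. It then suffices to verify that $-\wt\CH$ is lower semi-continuous, convex and proper (these are automatic for $-\CH(\cdot,\ogpsi) = \CP^*$), after which Fenchel-Moreau gives $-\wt\CH = \CP^* = -\CH(\cdot,\ogpsi)$.

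Concavity of $\wh\CH$ on $M_\sigma(\Omega)$ holds because for each fixed $\ogpsi' \in \Lipom^q$ the map $\mu \mapsto \CH(\int \ogpsi'\,d\mu,\ogpsi')$ is the composition of a linear evaluation with the concave function $\CH(\cdot,\ogpsi')$, and an infimum of concave functions is concave. Concavity of $\wt\CH$ then follows by mixing near-maximizers at $\gz_1,\gz_2$ to produce a competitor at $\alpha\gz_1 + (1-\alpha)\gz_2$. Upper semi-continuity of $\wt\CH$ uses the weak-$*$ compactness of $M_\sigma(\Omega)$: for $\gz_n \to \gz$ one picks $\mu_n$ with $\int \ogpsi\,d\mu_n = \gz_n$ and $\wh\CH(\mu_n) \geq \wt\CH(\gz_n) - 1/n$, extracts a weak-$*$ limit $\mu$ (so $\int \ogpsi\,d\mu = \gz$ by continuity), and applies the upper semi-continuity of $\wh\CH$ recalled in Remark \ref{rem-whusc} to conclude $\wt\CH(\gz) \geq \wh\CH(\mu) \geq \limsup \wt\CH(\gz_n)$. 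Properness of $-\wt\CH$ is witnessed at any point of $\nabla\CP(\R^q)$: for each $\gt$, by \eqref{Hnabla} the DGM $\mu_{\gt\cdot\ogpsi}$ satisfies $\wh\CH(\mu_{\gt\cdot\ogpsi}) = \CH(\nabla\CP(\gt),\ogpsi)$, a finite value by Theorem \ref{th-rayonana}(2), and this value is a lower bound for $\wt\CH(\nabla\CP(\gt))$.

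The principal obstacle is the upper semi-continuity of $\wt\CH$, which is not purely formal: it relies both on weak-$*$ compactness of $M_\sigma(\Omega)$ and on the upper semi-continuity of $\wh\CH$ noted in Remark \ref{rem-whusc}. Once these soft regularity facts are in hand, the identity $\wt\CH = \CH(\cdot,\ogpsi)$ is a direct application of the Fenchel-Moreau theorem.
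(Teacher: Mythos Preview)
Your proof is correct and follows essentially the same route as the paper: compute $(-\wt\CH)^{*}=\CP$ by regrouping the variational principle over level sets of $\int\ogpsi\,d\mu$, verify that $\wt\CH$ is concave and upper semi-continuous (the paper isolates this as a separate lemma with the same weak-$*$ compactness argument you sketch), and then invoke the biconjugate/Fenchel--Moreau theorem to conclude $-\wt\CH=\CP^{*}=-\CH(\cdot,\ogpsi)$. Your additional remarks on the easy inequality and on properness are correct but not needed for the argument.
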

\begin{proof}
For any $\gt\in\bb R^q$ we have
$$\begin{aligned}\CP(\gt)&=\sup_\mu\pare{\disp\wh\CH(\mu)+\gt\cdot\int\ogpsi\,d\mu}\\
&=\sup_{\gz\in\bb R^q}\sup_{\mu, \int\ogpsi\,d\mu=z}\pare{\wh\CH(\mu)+\gt\cdot\gz}\\
&=\sup_{\gz\in\bb R^q}\pare{\wt\CH(\gz)+\gt\cdot\gz}.
\end{aligned}$$
In other words $(-\wt\CH)^{*}=\CP$.
It is easily seen that $\wt\CH$ is concave. By Theorem 12.2 of \cite{Rockafellar}
the biconjugate $(-\wt\CH)^{**}$ of $-\wt\CH$ is equal to its closure.
The following lemma shows that $-\wt\CH$ is closed convex therefore
$-\wt\CH=\CP^*$.
As by definition $\CP^*=-\CH(\cdot,\ogpsi)$, we deduce that $\wt\CH=\CH(\cdot,\ogpsi)$.
\end{proof}

\begin{lemma}
\label{lem-wthusc}
The function $\wt\CH$ is upper semi continuous on $\bb R^q$.
\end{lemma}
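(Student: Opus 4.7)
The plan is to verify upper semi-continuity at an arbitrary $\gz_{0}\in\bb R^{q}$ by showing that for every $c\in\bb R$ and every sequence $\gz_{n}\to\gz_{0}$ with $\wt\CH(\gz_{n})\ge c$, one has $\wt\CH(\gz_{0})\ge c$. Equivalently, this amounts to proving that the super-level set $\{\gz\in\bb R^{q},\ \wt\CH(\gz)\ge c\}$ is closed in $\bb R^{q}$.

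First I would observe that the hypothesis $\wt\CH(\gz_{n})\ge c>-\infty$ forces $\gz_{n}\in I(\ogpsi)$, and since $I(\ogpsi)$ is a closed subset of $\bb R^{q}$ (it is even convex and closed, as already noted after Definition~\ref{def-entropyclasse}), the limit $\gz_{0}$ also belongs to $I(\ogpsi)$. For each $n$, the definition of $\wt\CH(\gz_{n})$ as a supremum allows me to pick a near-optimizer $\mu_{n}\in M_{\sigma}(\Om)$ with
$$\int\ogpsi\,d\mu_{n}=\gz_{n}\qquad\text{and}\qquad\wh\CH(\mu_{n})\ge\wt\CH(\gz_{n})-\frac{1}{n}\ge c-\frac{1}{n}.$$

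Next I would invoke compactness of $M_{\sigma}(\Om)$ in the weak-$\star$ topology (recalled in the setting) to extract a subsequence $\mu_{n_{k}}\to\mu_{0}\in M_{\sigma}(\Om)$. Since each $\psi_{i}\in\Lipom\subset\CC^{0}(\Om)$, the linear map $\mu\mapsto\int\ogpsi\,d\mu$ is continuous for the weak-$\star$ topology, so passing to the limit in the constraint gives $\int\ogpsi\,d\mu_{0}=\gz_{0}$. On the other hand, Remark~\ref{rem-whusc} says that $\wh\CH$ is upper semi-continuous on $M_{\sigma}(\Om)$, whence
$$\wh\CH(\mu_{0})\ \ge\ \limsup_{k\to+\infty}\wh\CH(\mu_{n_{k}})\ \ge\ \limsup_{k\to+\infty}\left(c-\frac{1}{n_{k}}\right)=c.$$
By the very definition of $\wt\CH(\gz_{0})$ as a sup over measures with the prescribed integral, this forces $\wt\CH(\gz_{0})\ge\wh\CH(\mu_{0})\ge c$, which closes the argument.

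The only subtlety to watch is that the supremum defining $\wt\CH(\gz_{n})$ need not be attained, so one must argue via the near-optimizers $\mu_{n}$ rather than actual maximizers; weak-$\star$ compactness combined with upper semi-continuity of $\wh\CH$ cleanly transfers the asymptotic inequality $\wh\CH(\mu_{n})\ge c-1/n$ to $\mu_{0}$, so the $-1/n$ slack is harmless. The rest of the argument uses only continuity of $\mu\mapsto\int\ogpsi\,d\mu$ on $M(\Om)$ and closedness of $I(\ogpsi)$, both of which are already available.
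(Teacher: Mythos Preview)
Your proof is correct and follows essentially the same approach as the paper: pick near-optimizers $\mu_{n}$ with $\int\ogpsi\,d\mu_{n}=\gz_{n}$ and $\wh\CH(\mu_{n})\ge\wt\CH(\gz_{n})-\frac1n$, extract a weak-$\star$ accumulation point $\mu_{0}$, and combine continuity of $\mu\mapsto\int\ogpsi\,d\mu$ with upper semi-continuity of $\wh\CH$. The only cosmetic difference is that you phrase the conclusion via closedness of super-level sets $\{\wt\CH\ge c\}$, whereas the paper works directly with the sequential characterization $\limsup_{n}\wt\CH(\gz_{n})\le\wt\CH(\gz)$ and distinguishes the cases $\gz\in I(\ogpsi)$ and $\gz\notin I(\ogpsi)$ explicitly.
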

\begin{proof}
Let $\gz$ be fixed in $\bb R^q$, let $(\gz_{n})$ be a sequence in $\bb R^q$ converging to $\gz$.
If $\gz$ is not in $\CI(\ogpsi)$ then neither is $\gz_n$ for $n$ big enough hence
$$\limsup_{\ninf}\wt\CH(\gz_{n})=-\infty=\wt\CH(\gz).$$
Let us thus assume that $\gz$ is in $\CI(\ogpsi)$.
If only a finite number of $\gz_n's$ belong to $\CI(\ogpsi)$ then 
$$\limsup_{\ninf}\wt\CH(\gz_{n})=-\infty\leq \wt\CH(\gz).$$
If an infinite number of $\gz_n's$ belong to $\CI(\ogpsi)$ then to compute the limsup we can assume without loss of generality that every $\gz_n$ is in $\CI(\ogpsi)$.
Let $\mu_{n}$ be an invariant measure such that $\disp\int\ogpsi\,d\mu_{n}=\gz_{n}$ and 
\begin{equation}
\label{eq1-wthusc}
\wh\CH(\mu_{n})\ge \wt\CH(\gz_{n})-\frac1n.
\end{equation}
Let $\mu$ be any accumulation point for $(\mu_{n})$ for the weak* topology. For simplicity we shall write $\mu=\lim_{\ninf}\mu_{n}$. 

Then,  $\disp \int\ogpsi\,d\mu=\lim_{\ninf}\int\ogpsi\,d\mu_{n}=\lim_{\ninf}\gz_{n}=\gz$ and as the metric entropy is upper semi-continuous we get 
$$\wt\CH(\gz)\ge \wh\CH(\mu)\ge \limsup_{\ninf}\wh\CH(\mu_{n})\ge\limsup_{\ninf}\pare{\wt\CH(\gz_{n})-\frac1n}=\limsup_{\ninf}\wt\CH(\gz_{n}).$$
\end{proof}

Finally we have:
\begin{corollary}
\label{coro-goodfibar}
For every $\gz\in \bb R^q$, 
$\fibar(\gz)=\wt\CH(\gz)+\frac\be2||z||^{2}$. 
\end{corollary}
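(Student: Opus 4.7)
The plan is to observe that this corollary is essentially a direct substitution, combining the definition of $\fibar$ given in \eqref{eqphi} with the identification of $\CH(\cdot,\ogpsi)$ and $\wt\CH$ established in Proposition \ref{prop-maxentropyequim}.

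More precisely, I would proceed as follows. By the very definition of $\fibar$ recalled in \eqref{eqphi}, for every $\gz \in \bb R^q$ one has
\[
\fibar(\gz) = \CH(\gz,\ogpsi) + \frac{\be}{2}\|\gz\|^2.
\]
Proposition \ref{prop-maxentropyequim} asserts that $\wt\CH(\gz) = \CH(\gz,\ogpsi)$ for every $\gz \in \bb R^q$. Substituting this identity into the previous display yields the claimed equality. Note the equality is to be understood in $[-\8,+\8[$: when $\gz \notin I(\ogpsi)$ both sides equal $-\8$ (since $\wt\CH(\gz) = -\8 = \CH(\gz,\ogpsi)$ in that case, the latter by part (1) of Theorem \ref{th-rayonana}), and the additive term $\tfrac{\be}{2}\|\gz\|^2$ is harmless.

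There is no real obstacle here since all the substantive work has been done beforehand: the concavity/upper semi-continuity of $\wt\CH$ (Lemma \ref{lem-wthusc}) is used in the proof of Proposition \ref{prop-maxentropyequim} to apply the biconjugation theorem, and Theorem \ref{th-rayonana} provides the finiteness properties of $\CH(\cdot,\ogpsi)$. The corollary is really a bookkeeping statement which records, in a convenient form for later use (in Section \ref{sec-proofthcwp}), that the auxiliary function $\fibar$ can be interpreted both variationally as a supremum over invariant measures of $\wh\CH(\mu) + \tfrac{\be}{2}\|\int\ogpsi\,d\mu\|^2$ restricted to measures realizing a prescribed value $\gz$, and spectrally via the Legendre transform of the log-spectral-radius.
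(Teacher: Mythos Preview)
Your proposal is correct and matches the paper's approach exactly: the corollary is stated without proof in the paper precisely because it is an immediate substitution of Proposition~\ref{prop-maxentropyequim} into the definition \eqref{eqphi} of $\fibar$, which is just what you do.
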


\subsubsection{Maxima for $\varphi_{\be}$ and $\fibar$}

The main result of this Subsection is 
\begin{proposition}
\label{prop-maxfifibar}
Inequality $\varphi_{\be}(\gz)\ge\fibar(\gz)$ holds for any $\gz$ in $\bb R^q$. Moreover $\varphi_{\be}(\gz)$ is maximum if and only if $\fibar(\gz)$ is maximum. Furthermore, if $\varphi_{\be}(\gz)$ is maximum then $\varphi_{\be}(\gz)=\fibar(\gz)$.
\end{proposition}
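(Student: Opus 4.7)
The first inequality $\varphi_{\be}(\gz)\ge\fibar(\gz)$ is almost a tautology. The plan is to specialize the infimum in
$\CH(\gz,\ogpsi)=\inf_{\gt}\{\log r_{\gt\cdot\ogpsi}-\gt\cdot\gz\}$ at the single value $\gt=\be\gz$, which yields $\CH(\gz,\ogpsi)\le \log r_{\be\gz\cdot\ogpsi}-\be\|\gz\|^2$. Adding $\frac{\be}{2}\|\gz\|^{2}$ to both sides produces exactly $\fibar(\gz)\le\varphi_{\be}(\gz)$.

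Next I would show that equality holds at every critical point $\gz^{*}$ of $\varphi_{\be}$. Since Theorem \ref{th-rayonana} and Proposition \ref{prop-diff-convex} give that $\CP$ is $\CC^{\infty}$, the function $\varphi_{\be}$ is smooth, and $\nabla\varphi_{\be}(\gz^{*})=0$ reads $\gz^{*}=\nabla\CP(\be\gz^{*})$. Plugging $\gt=\be\gz^{*}$ into formula \eqref{Hnabla} then gives $\CH(\gz^{*},\ogpsi)=\CP(\be\gz^{*})-\be\|\gz^{*}\|^{2}=\log r_{\be\gz^{*}\cdot\ogpsi}-\be\|\gz^{*}\|^{2}$; adding $\frac{\be}{2}\|\gz^{*}\|^{2}$ yields $\fibar(\gz^{*})=\varphi_{\be}(\gz^{*})$.

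Now, Lemma \ref{lem-majovarphi} confines all maxima of the continuous function $\varphi_{\be}$ to the compact hypercube $[-K,K]^{q}$, so $\varphi_{\be}$ does attain its maximum at some $\gt^{*}$, and this $\gt^{*}$ is necessarily a critical point of $\varphi_{\be}$; hence by the previous step $\fibar(\gt^{*})=\varphi_{\be}(\gt^{*})$. Combining with the universal inequality $\fibar\le\varphi_{\be}$ I obtain the sandwich $\fibar(\gz)\le\varphi_{\be}(\gz)\le\varphi_{\be}(\gt^{*})=\fibar(\gt^{*})$ for every $\gz$, which simultaneously shows that $\gt^{*}$ maximizes $\fibar$ and that $\sup\fibar=\sup\varphi_{\be}$. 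Conversely, if $\gz^{*}$ maximizes $\fibar$ then $\fibar(\gz^{*})=\sup\fibar=\sup\varphi_{\be}\ge\varphi_{\be}(\gz^{*})\ge\fibar(\gz^{*})$, forcing equality throughout, so $\gz^{*}$ maximizes $\varphi_{\be}$ and the two functions agree there.

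The only delicate point is the reverse implication (from a maximum of $\fibar$ to a maximum of $\varphi_{\be}$): since $\fibar$ is only upper semi-continuous and not a priori differentiable, one cannot mimic the forward direction by a critical-point calculation. The trick that unblocks it is to first use the smooth, easy-to-analyze function $\varphi_{\be}$ to show both that the common supremum is actually attained and that $\sup\fibar=\sup\varphi_{\be}$, after which the pointwise inequality $\fibar\le\varphi_{\be}$ is enough to close the argument by the sandwich above.
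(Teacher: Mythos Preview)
Your proof is correct and follows essentially the same route as the paper's: specialize the infimum defining $\CH(\gz,\ogpsi)$ at $\gt=\be\gz$ to get $\fibar\le\varphi_{\be}$, use the critical-point equation $\gz=\nabla\CP(\be\gz)$ together with \eqref{Hnabla} to obtain equality at maxima of $\varphi_{\be}$, and then run the sandwich argument in both directions. The only cosmetic difference is that you state the equality $\fibar(\gz^{*})=\varphi_{\be}(\gz^{*})$ for all critical points rather than only for maxima, which is harmless.
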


\begin{proof}
$\bullet$ Step 1. $\varphi_{\be}\ge \fibar$.
We use Equality \eqref{equ1-logrhz} with $\gt=\beta\gz$. This yields 

$$\fibar(\gz)=\disp\CH(\gz,\ogpsi)+\frac\be2\|\gz\|^{2}\leq \log r_{\gt\cdot\ogpsi}-\gt.\gz+\frac\be2||\gz||^{2}=\log r_{\be \gz\cdot\ogpsi}-\frac\be2||\gz||^{2}=\varphi_{\be}(\gz).$$

$\bullet$ Step 2. $\fibar(\gz)$ is maximal if and only if $\varphi_{\be}(\gz)$ is maximal and maximal values do coincide. 

Let $\gz$ be a maximum for $\varphi_{\be}$. Then, it is a critical point for $\varphi_{\be}$. 
As
$$\nabla \varphi_{\be}(\gz)=\beta \nabla \CP(\beta \gz)-\beta\gz$$
this yields $\gz=\nabla \CP(\beta \gz)$.
Using \eqref{Hnabla} we get
$$\CH(\gz,\ogpsi)=\CH(\nabla \CP(\beta \gz),\ogpsi)=\CP(\beta\gz)-\beta\gz\cdot\nabla \CP(\beta \gz)
=\CP(\beta\gz)-\beta\|\gz\|^2,$$
therefore 
$$\CP(\beta\gz)=\CH(\gz,\ogpsi)+\beta \|\gz\|^2.$$
Using step 1 and this last equality we get
$$\fibar(\gz)\leq \varphi_{\be}(\gz)=\CP(\beta\gz)-\frac{\beta}{2}\|\gz\|^2
=\CH(\gz,\ogpsi)+\beta \|\gz\|^2-\frac{\beta}{2}\|\gz\|^2=\fibar(\gz),$$
which shows that $\fibar(\gz)=\varphi_{\be}(\gz)$. 

On the other hand  for any $\gz'$,
$$\fibar(\gz')\le \varphi_{\be}(\gz')\le \varphi_{\be}(\gz)=\fibar(\gz),$$
which shows that $\gz$ is also a maximum for $\fibar$. 

Conversely, if $\gz$ is a maximum for $\fibar$, let $\gz'$ be any maximum for $\varphi_{\be}$. We get 
$$\fibar(\gz)\ge \fibar(\gz')=\varphi_{\be}(\gz')\ge \varphi_{\be}(\gz)\ge\fibar(\gz).$$
This shows that $\gz$ is also a maximum for $\varphi_{\be}$, which finishes the proof. 
\end{proof}

\begin{corollary}
\label{coro-maxfibar}
Maxima for $\fibar$ are reached on $\nabla\CP(\R^{q})$. 
\end{corollary}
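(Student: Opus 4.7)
The plan is to exploit what has already been proved about the coincidence of maxima of $\varphi_{\be}$ and $\fibar$ in Proposition \ref{prop-maxfifibar}, and then to read off the extra information contained in the critical-point condition for $\varphi_{\be}$. The point is that the smoothness of $\CP$ (Proposition \ref{prop-diff-convex}) makes $\varphi_{\be}$ smooth, while $\fibar$ is a priori only concave upper semi-continuous; so the criticality condition is easier to use on the $\varphi_{\be}$-side, and it directly expresses a maximizer as a gradient of $\CP$.

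First I would let $\gz$ be a maximum for $\fibar$. By Proposition \ref{prop-maxfifibar}, $\gz$ is also a maximum for $\varphi_{\be}$, and maxima of $\varphi_{\be}$ exist and are attained in the compact hypercube $[-K,K]^q$ by Lemma \ref{lem-majovarphi}. Since $\varphi_{\be}(\gt) = -\frac{\be}{2}\|\gt\|^2 + \CP(\be\gt)$ and $\CP$ is infinitely differentiable, $\varphi_{\be}$ is smooth on $\bb R^q$; hence the maximizer $\gz$ satisfies
\[
0 = \nabla \varphi_{\be}(\gz) = \be \nabla \CP(\be \gz) - \be \gz,
\]
so $\gz = \nabla \CP(\be \gz) \in \nabla \CP(\bb R^q)$, which is exactly the content of the corollary.

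There is essentially no obstacle: the work has been done already in Propositions \ref{prop-diff-convex} and \ref{prop-maxfifibar}, and the corollary is the geometric repackaging of the relation $\gz = \nabla\CP(\be\gz)$ that appeared in Step 2 of the proof of Proposition \ref{prop-maxfifibar}. The only thing worth double-checking is that $\varphi_{\be}$ indeed achieves its supremum, which is guaranteed by Lemma \ref{lem-majovarphi} (the supremum is attained in the compact hypercube where $\varphi_{\be}$ is continuous), so the critical-point argument can legitimately be applied to $\gz$.
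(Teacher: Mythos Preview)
Your proof is correct and follows exactly the same route as the paper: use Proposition \ref{prop-maxfifibar} to transfer maxima of $\fibar$ to maxima of $\varphi_{\be}$, invoke Lemma \ref{lem-majovarphi} to ensure these are interior critical points, and read off $\gz=\nabla\CP(\be\gz)$ from $\nabla\varphi_{\be}(\gz)=0$. The paper's proof is essentially identical, just more terse.
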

\begin{proof}
Proposition \ref{prop-maxfifibar} states that maxima for $\fibar$ are maxima for $\varphi_{\be}$. We have seen after Lemma \ref{lem-majovarphi} that all the maxima for $\varphi_{\be}$ are reached at critical points. 

Now, $\gt$ is a critical point for $\varphi_{\be}(\gt)=-\frac{\beta}{2}\|\gt\|^2+\log r_{\be\gramat t\cdot\ogpsi}$ means 
$$\gt=\nabla\CP(\be\gt)\in \nabla\CP(\R^{q}).$$
\end{proof}

\subsection{Measures maximizing quadratic pressure}

\subsubsection{Another expression for $\CP_{2}(\be)$}

We remind that the metric entropy $\mu\mapsto \wh\CH(\mu)$ is upper semi-continuous (see Remark \ref{rem-whusc}).

Therefore the function 
$$F:\mu\mapsto { \wh\CH(\mu)}+\frac{\beta}{2}\left\|\int\ogpsi\,d\mu\right\|^2$$
is upper semicontinuous hence attains it supremum on the compact set
$M_\sigma(\Omega)$.
$$\begin{aligned}
\CP_2(\beta)&=\max_{\mu\in\ M_\sigma(\Omega)} F(\mu)\\
 &{=\max_{\gz\in \bb R^q}\max\left\{ \wh\CH(\mu)+\frac\be2||\gz||^{2},\ \int\ogpsi \,d\mu=\gz\right\}}\\
&=\max_{\gz\in \bb R^q}\pare{\wt\CH(\gz)+\frac{\beta}{2}\|\gz\|^2}\\
&=\max_{\gz\in \nabla\CP(\R^q)}\fibar(\gz)\\
&=\max_{\gz\in \bb R^q}\varphi_{\beta}(\gz)
\end{aligned}$$
where the last equality comes from Proposition \eqref{prop-maxfifibar}  {and the fourth equality comes from  Corollaries \ref {coro-goodfibar} and \ref{coro-maxfibar}}.

\subsubsection{Good DGM maximize quadratic pressure}

We note 
$$M:=\{\gz\in\bb R^q; \fibar(\gz) \textrm{ is maximal}\}.$$
Let $\gz\in M$. We saw in the proof of Proposition \eqref{prop-maxfifibar} that $\gz$ is then a critical point for $\varphi_\beta$
hence $\gz=\nabla \CP(\beta \gz)=\ds\int \ogpsi \, d\mu_{\beta\gz\cdot\ogpsi}$. 
From \eqref{unique} we know that
$$
\wh\CH(\mu_{\beta\gz\cdot\ogpsi})=\CP(\beta\gz)-\int\beta\gz\cdot\ogpsi\,d\mu_{\beta\gz\cdot\ogpsi}
$$
hence from \eqref{Hnabla} we deduce that
$\CH(\gz,\ogpsi)=\wh\CH(\mu_{\beta\gz\cdot\ogpsi}),$
thus
$$ \fibar(\gz)=F(\mu_{\beta\gz\cdot\ogpsi}).$$

Let $\mu$ be any measure, $\gz':=\int \ogpsi \, d\mu$.
Then 
$$F(\mu)=\wh\CH(\mu)+\frac\be2||\gz'||^{2}\leq \wt\CH(\gz')+\frac\be2\|\gz'\|^{2}=\fibar(\gz')\le \fibar(\gz).$$

Therefore,
$\mu_{\beta\gz\cdot\ogpsi}$ maximizes $F$.

\bigskip

\subsubsection{Maxima for quadratic pressure are realized only by good DGM}

Conversely let $\mu$ maximizing the function $F$. Set $\gz:=\ds\int\ogpsi\,d\mu$. Then
$\gz$ is in $M$ hence satisfies 
$$\gz=\ds\int \ogpsi \, d\mu_{\beta\gz\cdot\ogpsi}=\ds\int\ogpsi\,d\mu,$$
and $\wh\CH(\mu)=\wt\CH(\gz)$. Now using \eqref{unique} we can write
$$\CP(\be\gz)=\wh\CH(\mu_{\be\gz\cdot\ogpsi})+\be\gz\cdot\int\ogpsi\,d\mu_{\be\gz\cdot\ogpsi}=\wt\CH(\gz)+\be\gz\cdot\gz=\wh\CH(\mu)+\be\gz\cdot\int\ogpsi\,d\mu,$$
 which means that $\mu$ is equal to $\mu_{\be\gz\cdot\ogpsi}$ by uniqueness of the (linear) equilibrium state.

\section{Proof of Theorem \ref{th-GCWP}}\label{sec-proofgcwp2}
\subsection{A useful computation}

Let $f:\Omega\to\R$ be continuous. We want to evaluate the limit of 
$\disp \int f(\om)d\munbe(\om)$ as $n\to+\8$. In the first step we do the computation without the normalizing term $Z_{n,\be}$ and estimate it in the second step. 
We recall the identity

\begin{equation}
\label{equ-hubstrato}
e^{\|\gxi\|^2}=\frac{1}{(2\pi)^{q/2}}\int_{\bb R^q} \exp\pare{-\frac{1}{2}\|\gt\|^2+\sqrt 2\gt.\gxi}\,d\gt.
\end{equation}
Then we have 
$$\begin{aligned}
Z_{n,\be}\int_{\Omega} f(\om)&d\munbe(\om)=\int_{\Omega} e^{\frac\be{2n}||S_{n}(\ogpsi)(\om)||^{2}}f(\om)\,d\P(\om)\\
&= \frac{1}{(2\pi)^{q/2}}\int_{\Omega}\int_{\bb R^q} e^{-\frac{1}{2}\|\gt\|^2}
e^{\sqrt{\frac{\beta}{n}}\gt.S_{n}(\ogpsi)(\omega)}f(\om)\,d\gt\,d\P(\om)\\
&=\frac{1}{(2\pi)^{q/2}}\int_{\bb R^q} e^{-\frac{1}{2}\|\gt\|^2}\int_{\Omega}\int_{\Omega_n(\omega_0)}e^{\sqrt{\frac{\beta}{n}}\gt.S_{n}(\ogpsi)(\al\omega)}f(\al\om)\,d\rho^{\otimes n}(\al)\,d\P(\om)\,d\gt\\
&= \frac{1}{(2\pi)^{q/2}}\int_{\bb R^q} e^{-\frac{1}{2}\|\gt\|^2}\int_{\Omega}\CL^{n}_{\sqrt\frac{\be}{n}\gt\cdot\ogpsi}(f)(\om)
\,d\P(\om)\,d\gt \\
&= \pare{\frac{\beta n}{2\pi}}^{q/2}\int_{\bb R^q} 
e^{-\frac{n\beta}{2}\|\gz\|^2}\int_{\Omega}\CL^{n}_{\beta\gz\cdot\ogpsi}(f)(\om)
\,d\P(\om)\,d\gz 
\end{aligned}$$
where we made the change of variable $\beta \gz=\sqrt{\frac{\beta}{n}}\gt$ to get the last equality.

We claim that the part of the integral in $\gz$ outside the hypercube $[-K,K]^{q}$ is negligible with respect to the other part. 
Indeed, 
\begin{multline}\label{mult}
\int_{\R^{q}\setminus[-K,K]^{q}} 
e^{-\frac{n\beta}{2}\|\gz\|^2}\int_{\Omega}\CL^{n}_{\beta\gz\cdot\ogpsi}(f)(\om)
\,d\P(\om)\,d\gz \\
\leq 
\int_{\R^{q}\setminus[-K,K]^{q}} 
e^{-\frac{n\beta}{2}\|\gz\|^2}\|\CL^{n}_{\beta\gz\cdot\ogpsi}\|_\infty \|f\|_\infty\,d\gz, 
\end{multline}
and $\|\CL^{n}_{\beta\gz\cdot\ogpsi}\|_\infty\leq e^{n\beta\|\gz\|\|\ogpsi\|_\infty}$,
so that
$$e^{-\frac{n\beta}{2}\|\gz\|^2}\|\CL^{n}_{\beta\gz\cdot\ogpsi}\|_\infty\leq
e^{n\beta(\|\gz\|\|\ogpsi\|_\infty-\frac{1}{2}\|\gz\|^2)}
\leq e^{-n\beta\frac{\|\gz\|^2}{4}}$$
for $\|\gz\|>4\|\ogpsi\|_\infty$, as we noticed in the proof of
Lemma \eqref{lem-majovarphi}.
Now 
\begin{multline*}
\int_{\R^{q}\setminus[-K,K]^{q}}
e^{-n\beta\frac{\|\gz\|^2}{4}}\,d\gz
\leq \sum_{i=1}^q\int_{|z_i|>K}e^{-n\beta\frac{z_i^2}{4}}
\prod_{j\neq i} e^{-n\beta\frac{z_j^2}{4}}\,d\gz\\
=
\pare{\frac{4\pi}{n\beta}}^{\frac{q-1}{2}}
\sum_{i=1}^q\int_{|z_i|>K}e^{-n\beta\frac{z_i^2}{4}}
\,dz_i,
\end{multline*}
and 
$$\int_{|z_i|>K}e^{-n\beta\frac{z_i^2}{4}}
\,dz_i\leq \frac{4}{n\beta K}e^{-n\beta\frac{K^2}{4}}.$$
Returning to \eqref{mult} we get
\begin{equation}
\int_{\R^{q}\setminus[-K,K]^{q}} 
e^{-\frac{n\beta}{2}\|\gz\|^2}\int_{\Omega}\CL^{n}_{\beta\gz\cdot\ogpsi}(f)(\om)
\,d\P(\om)\,d\gz=O\pare{\frac{e^{-n\beta\frac{K^2}{4}}}{n^{\frac{q+1}{2}}}}.
\end{equation}
if $K$ is greater than $4\|\ogpsi\|_\infty$.

Now, we recall that $$\varphi_{\beta}(\gz)=-\frac{\beta}{2}\|\gz\|^2+\log r_{\be\gramat z.\ogpsi}$$
and that if $f$ belongs to $\CC^{+1}(\Om)$ then
$$\CL^{n}_{\beta\gz\cdot\ogpsi}(f)(\om)=
e^{n\log r_{\be\gz\cdot\ogpsi}}
\left[\pare{\int_{\Omega} f\,d\nu_{\beta\gz\cdot\ogpsi}}G_{\beta\gz\cdot\ogpsi}(\om)+\Psi^n_{\beta\gz\cdot\ogpsi}(f)(\omega)\right],$$
where the operator norm of $\Psi_{\beta\gz\cdot\ogpsi}$ acting on $\CC^{+1}(\Om)$
is strictly less than one.
We write $\Psi_{\beta\gz\cdot\ogpsi}=e^{-\eps(\beta,\gz)}T(\beta,\gz)$ 
where $\eps(\be,\gz)$ is the spectral gap of the operator $\CL_{\beta\gz\cdot\ogpsi}$ and $||T(\beta,\gz)||_{L}= 1$.
Then
\begin{multline}\label{multK}
\int_{[-K,K]^{q}} 
e^{-\frac{n\beta}{2}\|\gz\|^2}\int_{\Omega}\CL^{n}_{\beta\gz\cdot\ogpsi}(f)(\om)
\,d\P(\om)\,d\gz \\
=
\int_{[-K,K]^{q}} e^{n\varphi_\beta(z)}\int_\Omega\left[\pare{\int_{\Omega} f\,d\nu_{\beta\gz\cdot\ogpsi}}G_{\beta\gz\cdot\ogpsi}(\om)+e^{-n\eps(\beta,\gz)}T^n(\beta,\gz)(f)(\omega)\right]\,d\P(\om)
 \,d\gz
\end{multline}

The spectral gap $\eps(\be,\gz)$ is lower semi-continuous in $\gz$ hence it attains its infimum $m(\beta)$ on
the compact set $[-K,K]^{q}$, which is strictly positive.
We set 
$$\alpha(n,z,f)=\int_\Omega e^{-n\eps(\beta,\gz)}T^n(\beta,\gz)(f)(\omega)\,d\P(\om),$$
and notice that for any $\gz$ in $[-K,K]^{q}$,
$$|\alpha(n,z,f)|\leq e^{-nm(\beta)}\|f\|_L.$$
Eventually we get
\begin{multline}
\label{equ1-cvmunbe}
Z_{n,\beta}\int_\Omega f(\om)\,d\munbe(\om)=
\left(\frac{\be n}{2\pi}\right)^{q/2}
\int_{[-K,K]^{q}}e^{n\varphi_{\be}(\gz)}\\
\left[\pare{\int_{\Omega} f\,d\nu_{\beta\gz\cdot\ogpsi}}\pare{\int_{\Omega}G_{\beta\gz\cdot\ogpsi}\,d\P}+
\alpha(n,\gz,f)\right]\,d\gz
+O\pare{\frac{e^{-n\beta\frac{K^2}{4}}}{n^{\frac{q+1}{2}}}}
\end{multline}

The normalization term $Z_{n,\be}$ is obtained taking $f\equiv1$. This yields 
\begin{multline}
\label{equ4-cvmunbe}
\int_\Omega f(\om)\,d\munbe(\om)= \\
\frac{\ds\int_{[-K,K]^{q}} e^{n\varphi_{\be}(\gz)}
\left[\pare{\int_{\Omega} f\,d\nu_{\beta\gz\cdot\ogpsi}}\pare{\int_{\Omega}G_{\beta\gz\cdot\ogpsi}\,d\P}+
\alpha(n,\gz,f)\right]\,d\gz
+O\pare{\frac{e^{-n\beta\frac{K^2}{4}}}{n^{\frac{q+1}{2}}}}}
{\ds\int_{[-K,K]^{q}}e^{n\varphi_{\be}(\gz)}
\left[\int_{\Omega}G_{\beta\gz\cdot\ogpsi}\,d\P+
\alpha(n,\gz,\indic)\right]\,d\gz
+O\pare{\frac{e^{-n\beta\frac{K^2}{4}}}{n^{\frac{q+1}{2}}}}}.
\end{multline}

where $\alpha(n,\gz,f)$ and $\alpha(n,\gz,\indic)$ 
converge uniformly to $0$ with respect to $\gz$ when $n$ tends to infinity.

\subsection{The case \texorpdfstring{$q=1$}{}}
In this case the function $\varphi_\beta$ is analytic hence admits only finitely many maxima,
and we can argue as in \cite{Leplaideur-Watbled1}.

\subsection{The higher dimensional case}

We assume that all the maxima for $\fibe$ are non-degenerated. 
\begin{lemma}
\label{lem-finitemax}
The function $\varphi_{\be}$ only admits finitely many maxima. 
\end{lemma}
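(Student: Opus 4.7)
The plan is to combine three ingredients already at our disposal: (i) regularity of $\varphi_\beta$, (ii) compactness of the set where maxima may live, and (iii) the non-degeneracy assumption which forces maxima to be isolated. I would then conclude by a standard compactness argument.

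First I would recall that, by Proposition \ref{prop-diff-convex}, the map $\gt\mapsto\log r_{\be\gt\cdot\ogpsi}$ is $\CC^{\infty}$ on $\R^{q}$, hence so is $\varphi_{\be}$. Next, by Lemma \ref{lem-majovarphi} and the remark following it, every maximum of $\varphi_{\be}$ belongs to the compact hypercube $[-K,K]^{q}$ for any $K>4\|\ogpsi\|_{\infty}$, and is necessarily a critical point.

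Now let $\gz^{*}$ be a maximum of $\varphi_{\be}$. By assumption, $d^{2}\varphi_{\be}(\gz^{*})$ is invertible. Applying the inverse function theorem to the $\CC^{\infty}$ map $\nabla\varphi_{\be}:\R^{q}\to\R^{q}$ at $\gz^{*}$, we get an open neighborhood $U$ of $\gz^{*}$ on which $\nabla\varphi_{\be}$ is a diffeomorphism onto its image. Since $\nabla\varphi_{\be}(\gz^{*})=0$, it follows that $\gz^{*}$ is the unique critical point of $\varphi_{\be}$ in $U$, hence also the unique maximum of $\varphi_{\be}$ in $U$. So every maximum is isolated.

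Finally, assume for contradiction that $\varphi_{\be}$ admits infinitely many maxima. All of them lie in the compact set $[-K,K]^{q}$, so one can extract a sequence $(\gz_{n})$ of pairwise distinct maxima converging to some $\gz_{\infty}\in[-K,K]^{q}$. By continuity of $\varphi_{\be}$, $\varphi_{\be}(\gz_{\infty})=\lim_{n\to\infty}\varphi_{\be}(\gz_{n})$ equals the maximal value, so $\gz_{\infty}$ is itself a maximum. But then $\gz_{\infty}$ is non-degenerate by hypothesis, hence isolated by the argument above, which contradicts $\gz_{n}\to\gz_{\infty}$ with $\gz_{n}\neq\gz_{\infty}$. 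Therefore the set of maxima is finite. The main (minor) obstacle is just to be careful that the non-degeneracy assumption applies to the accumulation point $\gz_{\infty}$, which is itself a maximum and therefore covered by the hypothesis.
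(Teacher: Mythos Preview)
Your proof is correct and follows the same overall strategy as the paper: argue by contradiction, use Lemma \ref{lem-majovarphi} to confine maxima to a compact set, extract a convergent sequence of distinct maxima, observe that the limit is again a maximum and hence non-degenerate by hypothesis, and derive a contradiction. The only difference lies in how the contradiction is extracted at the accumulation point. You invoke the inverse function theorem on $\nabla\varphi_{\be}$ to conclude that a non-degenerate critical point is isolated among critical points. The paper instead restricts $\varphi_{\be}$ to the segments $[\gz_{n},\gz_{n+1}]$, applies Rolle's theorem to produce points $\gz'_{n}$ where the second directional derivative vanishes, and passes to the limit of the unit directions $(\gz_{n}-\gz_{n+1})/\|\gz_{n}-\gz_{n+1}\|$ to exhibit an explicit nonzero vector $\vec u$ with $d^{2}\varphi_{\be}(\gz_{\infty})(\vec u,\vec u)=0$. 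Your argument is slightly cleaner and more standard; the paper's version has the small advantage of being self-contained (no black-box appeal to the inverse function theorem) and of producing the degenerate direction explicitly. Either way the content is the same.
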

\begin{proof}
The proof is done by contradiction. Let us consider a sequence $(\gz_{n})$ of maxima for $\varphi_{\be}$. We have seen that  all the maxima are critical points and are in  some compact set $[-K,K]^{q}$ (see Lemma \ref{lem-majovarphi} and discussion after).

Therefore, we may consider some accumulation point $\gz$ for the $\gz_{n}$'s. For simplicity we set $\gz=\lim_{\ninf}\gz_{n}$ and we assume that $\gz_{n}\neq\gz_{n+1}$ holds for every $n$. Note that by continuity, $\gz$ is also a critical point for $\fibe$ and $\fibe$ is maximal at $\gz$

We remind that $\varphi_{\be}$ is $\CC^{\8}$. 
If we consider the restriction $\fibe_{n}$ of $\varphi_{\be}$ to each segment $[z_{n},z_{n+1}]$, then $\fibe_{n}$ is $\CC^{\8}$ and $\fibe'_{n}(\gz_{n})=\fibe'_{n}(\gz_{n+1})=0$. Hence, Rolle's theorem shows that there exists $\gz'_{n}\in[\gz_{n},\gz_{n+1}]$ such that \begin{equation}\label{eq-hessdege1}
\fibe''_{n}(\gz'_{n})=0.
\end{equation}

Set $\vec u_{n}:=\gz_{n}-\gz_{n+1}$ and consider any accumulation point $\vec u$ for $\disp \frac{\gz_{n}-\gz_{n+1}}{||\gz_{n}-\gz_{n+1}||}$. Equality \eqref{eq-hessdege1} can be rewritten under the form 
$$d^{2}\fibe(\gz'_{n})(\vec u_{n},\vec u_{n})=0,$$
which yields as $\ninf$ $d^{2}\fibe(\gz)(\vec u,\vec u)=0$. This means that $\gz$ is a degenerated maximal point for $\fibe$, which is in contradiction with our assumption.
\end{proof}

Let $\gz_1,\cdots,\gz_k$ be the points where $\varphi_\beta$ attains its maximum.
We recall that the Laplace method (see \cite[Ch.IX Th.3]{Wong}) states

$$\int_{0} e^{n\varphi_\be(z)}g(\gz)\,d\gz\sim_{n\to\infty} \frac{(2\pi)^{q/2} g(\gz_1)e^{n\varphi_\be(\gz_{1})}}{n^{q/2}\sqrt{|\det d^{2}\varphi_{\be}(\gz_1)|}},$$
provided that $\varphi_{\be}$ admits no other critical point than $\gz_{1}$  in an open set $O$ of $\bb R^q$, that $g(\gz_1)\neq 0$ and that the Hessian matrix $d^{2}\varphi_{\be}(\gz_1)$ is negative definite (which holds by our assumption). 

\begin{remark}
\label{rem-gnonnul}
We emphasize the assumption $g(\gz_{1})\neq 0$. 
$\blacksquare$\end{remark}

\medskip
We choose $K$ such that $\varphi_\be(\gz_{1})+\beta\frac{K^2}{4}>0$, and
letting $n\to+\infty$ in \eqref{equ4-cvmunbe}, we get that for every $f$ in $\CC^{+1}(\Om)$, 
$$\lim_{\ninf}\int_\Omega f(\om)\,d\munbe(\om)=\frac{\disp\sum_{j=1}^k\frac{\disp\int G_{\beta\gz_j\cdot\ogpsi}\,d\P}{\sqrt{\det d^{2}\varphi_{\be}(\gz_j)}}
\int f\,d\nu_{\beta\gz_j\cdot\ogpsi}}
{\disp\sum_{j=1}^k\frac{\disp\int G_{\beta\gz_j\cdot\ogpsi}\,d\P}{\sqrt{\det d^{2}\varphi_{\be}(\gz_j)}}},$$
which finishes the proof of Theorem \ref{th-GCWP}.

\section{Application to the mean-field \texorpdfstring{$XY$}{} model}\label{sec-vlasov}

\subsection{The cosine potential}

The mean-field $XY$ model is a system of $n$ globally coupled planar spins (or alternatively
of $n$ globally interacting particles constrained on a ring), with Hamiltonian
$$H_n=-\frac{1}{2n}\sum_{i,j=1}^{n}\cos(p_i-p_j),$$
where $p_i\in [0,\pi[$.
We can interpret it as a generalized Curie-Weiss-Potts model by setting
$E=\bb T=\{z\in\bb R^2,\|z\|=1\}$, $\Omega=\bb T^{\bb N}$,
and $\ogpsi(\om)=\om_0$.
Indeed every $\om_k$ in the word $\om=\om_0\om_1\cdots$ of $\Omega$ is uniquely expressed as $\om_k=(\cos\theta_k,\sin\theta_k)$ with $\theta_k$ in $[-\pi,\pi[$, and then
$$\|S_n(\ogpsi)(\omega)\|^2=\|\sum_{k=0}^{n-1}\om_k\|^2
=\sum_{i,j=0}^{n-1}\langle\om_i,\om_j\rangle
=\sum_{i,j=0}^{n-1}\cos(\om_i-\om_j).$$
We endow $\bb T$ with the usual distance on $\bb R^2$, and the Haar measure $\rho$ given by
 $$\int_{\bb T} h(z)\,\rho(dz)=\int_{-\pi}^{\pi} h(\vec u_\theta)\,\frac{d\theta}{2\pi}, \textrm{ where }\vec u_\theta=(\cos\theta,\sin\theta).$$
As $\ogpsi$ only depends on the first coordinate, we see that for any $\gt$ in $\bb R^2$ and any $f$ in $\CC^{0}(\Om)$,
$$\CL_{\gt\cdot\ogpsi}(f)(\om)=\int_{-\pi}^{\pi}
e^{\gt\cdot\vec u_\theta}\,f(\vec u_\theta\om)\,\frac{d\theta}{2\pi},$$
so that the spectral radius of $\CL_{\be\gt\cdot\ogpsi}$ is
$$r_{\be\gt\cdot\ogpsi}=\lambda_{\be\gt\cdot\ogpsi}=\int_{-\pi}^{\pi}
e^{\be\gt\cdot\vec u_\theta}\,\frac{d\theta}{2\pi},$$
with eigenfunction $G_{\be\gt\cdot\ogpsi}=\indic$, and 
$\nu_{\be\gt\cdot\ogpsi}=\mu_{\be\gt\cdot\ogpsi}$.
We notice that $r_0=1$.
If $\gt\neq 0$, we denote by $|\gt|$ its euclidean norm and by $\theta_\gt$ the unique element of $[-\pi,\pi[$ such that
$\gt=|\gt| \vec u_{\theta_{\gt}}$.
Then
$$\begin{aligned}
r_{\be\gt\cdot\ogpsi}&=\int_{-\pi}^{\pi}
e^{\be |\gt|\cos(\theta_{\gt}-\theta)}\,\frac{d\theta}{2\pi}\\
&=\int_{\theta_{\gt}-\pi}^{\theta_{\gt}+\pi}
e^{\be |\gt|\cos y}\,\frac{dy}{2\pi}\\
&=\int_{-\pi}^{\pi}
e^{\be |\gt|\cos y}\,\frac{dy}{2\pi}
\end{aligned}$$
because the integral does not depend on the interval of length $2\pi$ where we compute it.
Eventually we have
\begin{equation}\label{Bessel}
r_{\be\gt\cdot\ogpsi}=\int_{0}^{\pi}
e^{\be |\gt|\cos y}\,\frac{dy}{\pi}=I_0(\be |\gt|),
\end{equation}
where $I_0$ is the modified Bessel function of order zero,
and we get 
$$\varphi_{\beta}(\gt)=-\frac{\beta}{2}|\gt|^2+\log r_{\be\gt\cdot\ogpsi}=-\frac\be2 |\gt|^{2}+\log I_0(\be|\gt|). $$
This shows that $\varphi_{\beta}(\gt)$ is constant on all the circles centered in 0. 

\begin{remark}
\label{rem-infinitequilquadra}
Unless $\varphi_{\be}$ is maximal only at $0$, which does not hold for every $\be$ as we will see below, we have here an example where all the maxima of the auxiliary function are degenerated.
$\blacksquare$\end{remark}

We set 
$$\phi_{\be}(x):=-\frac{\be}{2} x^{2}+\log I_0(\be x)\textrm{ for }x\geq 0.$$ 
The equality \eqref{equ4-cvmunbe} becomes
\begin{multline}
\label{equ4-XY}
\int_\Omega f(\om)\,d\munbe(\om)= \\
\frac{\ds\int_{B(0,K)} e^{n\varphi_{\be}(\gz)}
\left[\int_{\Omega} f\,d\mu_{\beta\gz\cdot\ogpsi}+
\alpha(n,\gz,f)\right]\,d\gz
+O\pare{\frac{e^{-n\beta\frac{K^2}{4}}}{n^{\frac{3}{2}}}}}
{\ds\int_{B(0,K)}e^{n\varphi_{\be}(\gz)}\,d\gz
+O\pare{\frac{e^{-n\beta\frac{K^2}{4}}}{n^{\frac{3}{2}}}}}.
\end{multline}
where we replaced for commodity the square $[-K,K]^2$ by the disk $B(0,K)$.
We are thus led to study the asymptotic behaviour of the integral 
$$I(n,f)=\ds\int_{B(0,K)} e^{n\varphi_{\be}(\gz)}
\left[\int_{\Omega} f\,d\mu_{\beta\gz\cdot\ogpsi}\right]\,d\gz.$$
In polar coordinates we write $\gz=r\vec u_\theta$ and we get
$$I(n,f)=\ds\int_0^K\int_{-\pi}^{\pi} e^{n\phi_{\be}(r)}
\left[\int_{\Omega} f\,d\mu_{\beta r\vec u_{\theta}\cdot\ogpsi}\right]\,r\,dr\,d\theta.$$
For $x \in\bb R_+$, we denote by $\eta_{x}$ the mean value of DGM's defined by
$$\int_{\Omega} h\,d\eta_x=
\frac{1}{2\pi}\int_{-\pi}^{\pi} 
\left[\int_{\Omega} h\,d\mu_{x\vec u_{\theta}\cdot\ogpsi}\right]\,d\theta$$
for any bounded measurable $h$,
so that 
$$I(n,f)=2\pi \ds\int_0^K e^{n\phi_{\be}(r)}\pare{\int_{\Omega} f\,d\eta_{\be r}}\,r\,dr,$$
which is then a one dimensional Laplace integral.
We study the maximum of the function $\phi_\be$ on $\bb R_+$.
First we notice that $0\leq I_0(\be x)\leq \be x$ and $I_0(0)=1$, 
hence 
$$\phi_\be(x)\leq \be x(1-\frac{x}{2})\textrm{ and } \phi_\be(0)=0,$$
from which we deduce that $\underset{\bb R_+}{\max}\,\phi_\be=\underset{[0,2[}{\max}\,\phi_\be$.
Next we look for the critical points of $\phi_\be$ on $[0,2[$. We compute the first and second derivatives
\begin{equation}\label{der-prem}\phi_\be'(x)=\be\left[\pare{\frac{I_0'}{I_0}}(\be x)-x\right] 
=\be\left[ \frac{\int_0^\pi e^{\be x\cos\theta}\cos\theta\,d\theta}{\int_0^\pi e^{\be x\cos\theta}\,d\theta}-x\right],
\end{equation}
\begin{multline}\label{der-sec}\phi_\be''(x)
=\be\left[\be\pare{\frac{I_0''\,I_0-I_0'^2}{I_0^2}}(\be x)-1\right]\\
=\be\left[\frac{\int_0^\pi e^{\be x\cos\theta}\cos^2\theta\,d\theta}{\int_0^\pi e^{\be x\cos\theta}\,d\theta}-
\pare{ \frac{\int_0^\pi e^{\be x\cos\theta}\cos\theta\,d\theta}{\int_0^\pi e^{\be x\cos\theta}\,d\theta}}^2-1\right].
\end{multline}
We notice that $\phi_\be'(x)\leq \be (1-x)$, from which we deduce that
$\underset{\bb R_+}{\max}\,\phi_\be=\underset{[0,1]}{\max}\,\phi_\be$.
As $I_0'(0)=0$ we know that $\phi_\be'(0)=0$.
We compute
$$\phi_\be''(0)=
\be\left[\be I_0''(0)-1\right]
=\be\left[\frac{\be}{\pi} \int_0^\pi \cos^2\theta\,d\theta-1\right]
=\be\left[\frac{\be}{2}-1\right].$$
We shall thus consider three cases: $\be>2$, $\be=2$, and $\be<2$.
First we take a closer look at the critical points of $\phi_\be$.
We recall that the Bessel function $I_0$ satisfies the differential equation
(we refer for instance to \cite{Bowman} for information about Bessel functions)
\begin{equation}\label{edo}
I_0''(x)+\frac{1}{x}I_0'(x)-I_0(x)=0
\end{equation} 
so that 
$$\pare{\frac{I_0''}{I_0}}(\be x)=1-\frac{1}{\be x}\pare{\frac{I_0'}{I_0}}(\be x).$$
Replacing in \eqref{der-sec} we get that for every $x$,
\begin{multline}\label{der-sec2}
\phi_\be''(x)=\be^2\left[\pare{\frac{I_0''}{I_0}}(\be x)-\pare{\frac{I_0'}{I_0}}^2(\be x)-\frac{1}{\be}\right]\\
=-\be^2\left[\pare{\frac{I_0'}{I_0}}^2(\be x)+\frac{1}{\be x}\pare{\frac{I_0'}{I_0}}(\be x)-1+\frac{1}{\be}\right].
\end{multline}
Now from \eqref{der-prem} we know that $r$ is a critical point of $\phi_\be$ if and only if 
$$\pare{\frac{I_0'}{I_0}}(\be r)=r.$$
If $r$ is such a point then replacing in \eqref{der-sec2} we get
\begin{equation}\label{pc}
\phi_\be''(r)=-\be^2\left[r^2+\frac{2}{\be}-1\right].
\end{equation}

\textit{Case $\be>2$:}
In this case $\phi_\be''(0)>0$ hence $0$ is not a maximum point.
We claim that $\phi$ has a unique maximum and that it belongs to $\,]\sqrt{\frac{\be-2}{\be}},1]$.

We denote by $r_1<\cdots<r_m$ the $m$ points of $]0,1]$ where
$\phi_\be$ attains its maximum $M$ on $\bb R_+$.
Then every $r_k$ satisfies $\phi_\be'(r_k)=0$ and $\phi_\be''(r_k)\leq 0$.
Remember that every critical point $r$ satisfies \eqref{pc} which we rewrite
\begin{equation}\label{pc2}\phi_\be''(r)=\be^2\left[\frac{\be-2}{\be}-r^2\right]
=\be^2\pare{\sqrt{\frac{\be-2}{\be}}-r}\pare{\sqrt{\frac{\be-2}{\be}}+r}.
\end{equation}
As $\phi_\be''(r_k)\leq 0$ we deduce that $r_1\geq  \sqrt{\frac{\be-2}{\be}}$.
We observe that any critical point $r$ strictly bigger than $r_{1}$ satisfies $\phi_\be''(r)<0$,
which means $r$ is a local maximum for $\phi_\be$. 
Now, if $m\geq 2$ then between $r_1$ and $r_2$ there must be a local minimum
which is also a critical point. This yields a contradiction.
Therefore, $m=1$ and $r_{1}$ is the unique critical point for $\phi$. 

Let us  show that $r_1>  \sqrt{\frac{\be-2}{\be}}$. Indeed if $r_1=\sqrt{\frac{\be-2}{\be}}$ then 
$\phi_\be'(r_1)=\phi_\be''(r_1)=0$.
From \eqref{der-prem} we know that
$$\forall\,x,\, \pare{\frac{I_0'}{I_0}}(\be x)=x+\frac{\phi_\be'(x)}{\be}.$$
Replacing in \eqref{der-sec2} we get that
\begin{equation}\label{der-sec3}
\forall\,x,\, \phi_\be''(x)
=-\phi_\be'^2(x)-\pare{2\be x+\frac{1}{x}}\phi_\be' (x)-\be^2x^2+\be^2-2\be 
\end{equation}
which is a differential equation satisfied by $\phi_\be$.
When we derive this equality we get that
\begin{equation}\label{der-sec4}
\forall\,x,\, \phi_\be'''(x)
=-2\phi_\be'(x)\phi_\be''(x)
-\pare{2\be-\frac{1}{x^2}}\phi_\be' (x)
-\pare{2\be x+\frac{1}{x}}\phi_\be'' (x)
-2\be^2x.
\end{equation}
We deduce that
$\phi_\be'''(r_1)
=-2\be^2 r_1$ is strictly negative, therefore $r_1$ can not be a maximum, which yields a contradiction. 
Hence $r_1 >\sqrt{\frac{\be-2}{\be}}$ holds and this finishes to prove the claim. 

Now we are ready to conclude the case $\be >2$.
We apply the Laplace method to the integral 
$I(n,f)$ and we find that
$$I(n,f)\sim 2\pi\frac{\sqrt{2\pi} e^{n\phi_{\be}(r)}
r\,(\int_{\Omega} f\,d\eta_{\beta r})}{n^{1/2}\sqrt{|\phi_\be''(r)|}},$$
hence
$$\int_\Omega f(\om)\,d\munbe(\om)\sim 
\frac{I(n,f)}{I(n,\indic)}
\sim \int_{\Omega} f\,d\eta_{\beta r}.
$$

\textit{Case $\be<2$:}
In this case $\phi'_\be(0)=0$ and $\phi_\be''(0)<0$ hence $0$ is a local maximum of $\phi_\be$.
The equation \eqref{pc} tells us that every critical point $r$ satisfies
$$\phi_\be''(r)=-\be^2\left[r^2+\frac{2-\be}{\be}\right],$$
which
is strictly negative, therefore every critical point is a local maximum. The same argument than above shows that $\phi_\be$ attains its maximum at $0$ and only at $0$.

As the maximum is reached at 0, we cannot directly apply Laplace method as it is emphasized in Remark \ref{rem-gnonnul}. 
We then use the following lemma. It is a special version of Laplace method and can be found in \cite{Dieudonne}.
\begin{lemma}\label{asymp}
Let $\al$ and $\ga$ be two positive real numbers. Then, for any  sequence $(b_{n})$ such that $nb_{n}^{\al}\to+\8$
$$\int_{0}^{b_{n}}x^{\ga}e^{-nx^{\al}}dx\sim_{\ninf}\frac1{\al\,n^{\frac{\ga+1}\al}}\Gamma\left(\frac{\ga+1}\al\right).$$
\end{lemma}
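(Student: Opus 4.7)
The plan is to reduce the integral to (a truncation of) the Gamma function integral via a single change of variables. The result is classical and the proof is essentially mechanical; I would present it in two short steps.

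First I would make the substitution $u = nx^{\al}$, so that $x = (u/n)^{1/\al}$ and $dx = \frac{1}{\al\, n^{1/\al}}\, u^{1/\al - 1}\,du$. Since $x^{\ga} = n^{-\ga/\al}\, u^{\ga/\al}$, one obtains
$$x^{\ga}\,e^{-nx^{\al}}\,dx \;=\; \frac{1}{\al\, n^{(\ga+1)/\al}}\; u^{(\ga+1)/\al - 1}\, e^{-u}\,du.$$
The bounds $x=0$ and $x=b_n$ become $u=0$ and $u = nb_n^{\al}$ respectively, so
$$\int_{0}^{b_n} x^{\ga}\, e^{-nx^{\al}}\,dx \;=\; \frac{1}{\al\, n^{(\ga+1)/\al}} \int_{0}^{nb_n^{\al}} u^{(\ga+1)/\al - 1}\, e^{-u}\,du.$$

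Second, I would exploit the hypothesis $nb_n^{\al} \to +\8$ together with the definition of the Gamma function. Because $\al, \ga > 0$, the exponent $(\ga+1)/\al - 1 > -1$ guarantees integrability at $0$, and the factor $e^{-u}$ controls the tail. Monotone convergence therefore gives
$$\int_{0}^{nb_n^{\al}} u^{(\ga+1)/\al - 1}\, e^{-u}\,du \;\tend{n}{+\8}\; \int_{0}^{+\8} u^{(\ga+1)/\al - 1}\, e^{-u}\,du \;=\; \Gamma\!\left(\frac{\ga+1}{\al}\right).$$
Multiplying by the prefactor $1/(\al\, n^{(\ga+1)/\al})$ yields the announced equivalent. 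There is no genuine obstacle here: the only points to check are the change of variable and the integrability at $0$, both of which are routine consequences of $\al,\ga>0$. The content of the lemma is really just the observation that the prescribed truncation $b_n$ is large enough—on the rescaled variable $u=nx^\al$—to recover the full Gamma integral in the limit.
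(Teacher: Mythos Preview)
Your proof is correct and follows exactly the approach of the paper, whose entire argument is the single sentence ``Just set $u=nx^{\al}$.'' You have simply written out the details of that substitution and the passage to the limit.
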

\begin{proof}
Just set $u=nx^{\al}$. 
\end{proof}

We apply Lemma \eqref{asymp}  for $\disp\int_0^{b_{n}}e^{n\phi_{\be}(r)}\pare{\int_{\Omega} f\,d\eta_{\be r}}\,r\,dr,$, with $b_{n}=1/\sqrt[4] n$.  Because $b_{n}\to0$, we can get  $\phi_{\be}(r)=-\phi_{\be}''(0)r^{2}+O(r^{3})$ on $[0,b_{n}]$.   
Note that for $r\in [0,b_{n}]$, by continuity for the  eigen-measures  $\nu_{\be\gt\cdot\ogpsi}$ for operators $\CL_{\gt\cdot\ogpsi}$ we also have. 
$$r\int f\,d\eta_{\be r}\sim_{\ninf} r\int f\,d\eta_{0}$$
A computation shows that $\disp \int_{b_{n}}^{b}e^{n\phi_{\be}(r)}\pare{\int_{\Omega} f\,d\eta_{\be r}}\,r\,dr,$
is of order less than $e^{-nb_{n}^{2}/2}$ if $b$ is chosen small but positive such that $\phi_{\be}(r)\le -\phi''_{\be}(0)\frac{r^{2}}2$ on $[0,b]$. Then, $nb_{n}^{2}\to+\8$ yields that this quantity is exponentially small (in $n$). Because $0$ is the unique maximum for $\phi_{\beta}$, $\disp \int_{b_{n}}^{b}e^{n\phi_{\be}(r)}\pare{\int_{\Omega} f\,d\eta_{\be r}}\,r\,dr,$ is of order less than $e^{-n\eps(b)}$ with $\eps(b)>0$.

Hence we get 
$$I(n,f)\sim 2\pi\frac{ \int_{\Omega} f\,d\eta_{0}}{n|\phi_\be''(0)|}\Gamma(1)=
2\pi\frac{ \int_{\Omega} f\,d\mu_{0}}{n|\phi_\be''(0)|}
,$$
hence
$$\int_\Omega f(\om)\,d\munbe(\om)\sim 
\frac{I(n,f)}{I(n,\indic)}
\sim \int_{\Omega} f\,d\mu_{0}.
$$

\textit{Case $\be=2$:}
In this case $\phi'_\be(0)=\phi_\be''(0)=0$ and 
every critical point $r\neq 0$ is a local maximum since it satisfies 
$$\phi_\be''(r)=-\be^2r^2,$$
which is strictly negative.
We deduce, as above, that there exists only one maximum, which may be 0 or not. 
If it is 0, then we conclude as before but we have to pick an higher order for the derivative (and use Lemma \ref{asymp} with $\al\ge 4$). If it is not 0, then we conclude the computation as above.


We conclude that for every $\be>0$, the sequence of measures 
$(\munbe)_{n\in\bb N}$ weakly converges to $\eta_{\be r}$ where $r$ is the unique point at which $\phi_\be$ reaches its maximum on $\bb R^+$.
\bibliographystyle{plain}

\end{document}